\chardef\bslash=`\\ 
\newtheorem{theorem}{Theorem}[section]
\newtheorem*{SCtheorem}{Theorem \ref{fctsc}}
\newtheorem*{GSPtheorem}{Theorem \ref{GSP}}
\newtheorem*{APtheorem}{Theorem \ref{dist}}
\newtheorem{corollary}[theorem]{Corollary}
\newtheorem{lemma}[theorem]{Lemma}
\newtheorem{proposition}[theorem]{Proposition}
\theoremstyle{definition}
\newtheorem{definition}{Definition}[section]
\newtheorem*{ack}{Acknowledgement}
\theoremstyle{remark}
\DeclareMathOperator{\real}{Re}
\DeclareMathOperator{\Img}{Im}
\DeclareMathOperator{\ord}{ord}
\DeclareMathOperator{\st}{st}
\DeclareMathOperator{\supp}{supp}
\DeclareMathOperator{\Div}{Div}
\newcommand{\eval}[2][\right]{\relax
  \ifx#1\right\relax \left.\fi#2#1\rvert}
\begin{document}
\title{A factorization of a super-conformal map}
\author{Katsuhiro Moriya}
\address{Division of Mathematics, Faculty of Pure and Applied Sciences, 
University of Tsukuba\\
 1-1-1 Tennodai, Tsukuba, Ibaraki 305-8571\\Japan}
\email{moriya@math.tsukuba.ac.jp} 
\issueinfo{}{}{}{}
\begin{abstract}
A super-conformal map and a minimal surface are factored into a product of two maps by modeling the Euclidean four-space and the complex Euclidean plane on the set of all quaternions. 
One of these two maps is a holomorphic map or a meromorphic map. 
These conformal maps adopt properties of a holomorphic function or a meromorphic function. 
Analogs of the Liouville theorem, the Schwarz lemma, the Schwarz-Pick theorem, the Weierstrass factorization theorem, the Abel-Jacobi theorem, and a relation between zeros of a minimal surface and branch points of a super-conformal map are obtained. 
\end{abstract}
\maketitle
\tableofcontents
\section{Introduction}
Pedit and Pinkall considered 
a conformal map from a Riemann surface to the Euclidean space of dimension four to be an analog of a holomorphic function or a meromorphic function in \cite{PP98}, 
by modeling $\mathbb{R}^4$ on the set of all quaternions $\mathbb{H}$. 
Let $M$ be a Riemann surface with complex structure $J$. 
Given a conformal map from $M$ to $\mathbb{R}^4$, 
there exists a complex structure of $\mathbb{R}^4$, parametrized by $M$, such that 
the conformal map is holomorphic about this complex structure at each point of $M$ (see Figure \ref{analogy}). 

   \begin{table}[h]
   \begin{center}
   \begin{tabular}{l|l}
   map&equation\\
   \hline
A conformal map $f\colon M\to\mathbb{H}$&$df\circ J=N\,df$,\\
&$N\colon M\to\Img\mathbb{H}$, $N^2=-1$\\
A holomorphic function $h\colon M\to\mathbb{C}$&$dh\circ J=i\,dh$
\end{tabular}
\end{center}
   \caption{A conformal map and a holomorphic function.}
   \label{analogy}
   \end{table}

The motivation for making this interpretation of a conformal map is to obtain its global properties. 
After a long history, the theory of meromorphic functions has been successful in obtaining global properties of meromorphic functions. 
Compared with the theory of meromorphic functions, the theory of conformal maps seems 
to be insufficiently developed in obtaining global properties. 

The above interpretation raises a problem whether a conformal map adopts to a property of a meromorphic function.
In \cite{PP98}, the order of a zero of a conformal map and 
the degree of a conformal map are defined (Theorem 3.2, Definition 3.2). 
The Riemann-Roch theorem for conformal maps is proved (Section 4). 
Quaternionic holomorphic curves in the 
quaternionic projective space in \cite{FLPP01} shows various properties of meromorphic functions. 

This important achievement mainly arises for a compact Riemann surface without boundary. 
However, studies on open Riemann surfaces are still in their infancy.
We will study whether a conformal map adopts properties of a meromorphic function on an open Riemann surface.

Factoring a conformal map into a product of maps is an effective way to attack this problem. 
The example, in Example of \cite{PP98}, implies that a conformal map can be factored into a product of two conformal maps. 
The topic in \cite{Moriya08} is considered as
whether a Lagrangian conformal map becomes a product of 
two Lagrangian conformal maps. 

Using a complex structure, we identify the quaternionic vector space $\mathbb{H}$ with 
a complex vector space $\mathbb{C}^2$. 
If one of the factors of a conformal map is a meromorphic map into $\mathbb{C}^2$, 
then the conformal map adopts a properties of a meromorphic function, such as zeros and poles,  through the meromorphic factor.
If the remaining factor reflects properties of a conformal map, this factorization would be a useful factorization in investigating a conformal map. 

In this paper, we provide this type of factorization of 
a super-conformal map (Theorem \ref{fctsc}) and a minimal surface (Theorem \ref{fctmin}). 

A super-conformal map is a conformal map whose curvature ellipse is a circle at each point (\cite{BFLPP02}).  
A holomorphic map and an anti-holomorphic map from $M$ to $S^2\cong\mathbb{C}P^1$ 
are super-conformal maps (see Lemma \ref{mersc}). 
The curvature ellipse is a central topic of the study of surfaces in $\mathbb{R}^4$. 
There is comprehensive explanation for classical results in \cite{Wong46}, \cite{Wong52}, and \cite{Friedrich97}. 
A super-conformal map is also called a Bor\r{u}vka's surface after Bor\r{u}vka's study (\cite{Boruvka28})
or a Wintgen ideal surface (\cite{PV08}) because the equality holds in Wintgen's inequality (\cite{Wintgen79}).  
A superminimal surface in $\mathbb{R}^4$ in \cite{Friedrich97} is a minimal super-conformal map. 

Rouxel \cite{Rouxel89} showed that a conformal transform of a super-conformal map is a super-conformal map. 
Castro \cite{Castro04} showed that the Whitney sphere is the only Lagrangian super-conformal map from 
a closed Riemann surface.  
Chen \cite{ChenBY10}  classified all super-conformal maps such that 
the absolute value of the Gaussian curvature is equal to that of the normal curvature at each point. 
Friedrich \cite{Friedrich97} described superminimal surfaces in terms of the twistor space. 
A super-conformal map in $\mathbb{R}^4$ is a stereographic projection of $S^4$ composed with the twistor projection of 
a holomorphic map from $M$ to $\mathbb{C}P^3$.
Thus 
a super-conformal map is a Willmore surface with vanishing Willmore energy  (see \cite{BFLPP02}). 
In \cite{Moriya09}, \cite{Moriya10} and \cite{DT09}, it is shown that 
a holomorphic null curve is associated with a super-conformal map.

In \cite{BFLPP02}, two maps from $M$ to $S^2\cong\mathbb{C}P^1$ are associated with a conformal map from $M$ to $\mathbb{H}$. These maps are called the left normal and the right normal of a conformal map. 
A super-conformal map has an anti-holomorphic left normal or an anti-holomorphic right normal. 
A minimal surface has a holomorphic left normal and a holomorphic right normal. 

We contemplate a super-conformal map with anti-holomorphic left normal. 
If $M$ is a closed Riemann surface and $N\colon M\to S^2\cong \mathbb{C}P^1$ is a non-constant holomorphic map or a non-constant anti-holomorphic map, 
then $N$ is surjective. 
We consider the case where 
$M$ is an open Riemann surface and 
the left normal is a non-constant holomorphic map or a non-constant anti-holomorphic map $N\colon M\to N(M)\subsetneq S^2$. 

Let $E$ be the eigenbundle of the left regular representation of $N$ on $\mathbb{H}$ with eigenvalue $+i$. 
Then $E$ has 
a global super-conformal trivializing section 
$\psi\colon M\to\mathbb{H}$ (Lemma \ref{gscs}). 
Then we have the following factorization: 
\begin{SCtheorem}[Factorization theorem for super-conformal maps]
Let $V=M\times \mathbb{H}$ with the projection 
$\pi\colon V\to M$ be the trivial right quaternionic 
line bundle over $M$, 
$N\colon M\to N(M)\subsetneq S^2\cong\mathbb{C}P^1$ be an 
anti-holomorphic map, 
$E\subset V$ be the eigenbundle of the left regular representation of $N$ on $\mathbb{H}$ with eigenvalue $+i$ and 
$\psi$ be a global super-conformal trivializing section of $E$. 
A map $f\colon M\to\mathbb{H}$ is a super-conformal map with anti-holomorphic left normal $N$ 
if and only if 
$f=\psi(\lambda_0+\lambda_1j)$ with holomorphic functions $\lambda_0$ and $\lambda_1$ on $M$. 
\end{SCtheorem}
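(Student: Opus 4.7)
The plan is to reduce the equivalence to a direct differentiation, leveraging two properties of the super-conformal trivialising section $\psi$: (a) as a section of $E$, $\psi$ satisfies $N\psi=\psi i$ pointwise on $M$; (b) by Lemma \ref{gscs}, $\psi$ is itself super-conformal with left normal $N$, i.e., $d\psi\circ J=N\,d\psi$.

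First, I would record the pointwise splitting $\mathbb{H}=E\oplus E j$ as smooth right $\mathbb{C}$-bundles. At each $p\in M$, left multiplication by $N(p)$ is a right $\mathbb{C}$-linear complex structure on $\mathbb{H}$; its $+i$- and $-i$-eigenspaces are $E_p$ and $E_p j$ respectively. Since $\psi(p)$ is a $\mathbb{C}$-basis of $E_p$, every $f\colon M\to\mathbb{H}$ admits a unique presentation $f=\psi\lambda_0+\psi\lambda_1 j=\psi(\lambda_0+\lambda_1 j)$ for $\mathbb{C}$-valued functions $\lambda_0,\lambda_1$. The task then reduces to translating the conformality equation $df\circ J=N\,df$ into a condition on the pair $(\lambda_0,\lambda_1)$.

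Next, I differentiate $f=\psi(\lambda_0+\lambda_1 j)$ and compute $df\circ J-N\,df$. The terms involving $d\psi$ cancel by (b). For the remaining terms, (a) gives $N\psi=\psi i$, and moving $i$ past the $\mathbb{C}$-valued forms $d\lambda_k$ (so that $i\,(d\lambda_1)\,j=(d\lambda_1\,i)\,j$) yields
\[
df\circ J-N\,df=\psi\bigl[(d\lambda_0\circ J-i\,d\lambda_0)+(d\lambda_1\circ J-i\,d\lambda_1)j\bigr].
\]
Because $\psi$ is nowhere zero and $\mathbb{C}\oplus\mathbb{C}j$ is an internal direct sum of $\mathbb{H}$, this vanishes if and only if $d\lambda_k\circ J=i\,d\lambda_k$ for $k=0,1$, i.e., precisely when $\lambda_0$ and $\lambda_1$ are holomorphic. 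Combined with the standing hypothesis that $N$ itself is anti-holomorphic, this gives both directions of the theorem: $f$ has anti-holomorphic left normal $N$ (hence is super-conformal) exactly when the pair $(\lambda_0,\lambda_1)$ is holomorphic.

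I expect the main obstacle to be not conceptual but notational: carefully tracking the left versus right actions of the non-commutative $\mathbb{H}$, in particular verifying that pushing $i$ past $d\lambda_1\,j$ lands cleanly in the $\mathbb{C}j$-component, so that the holomorphicity conditions on $\lambda_0$ and $\lambda_1$ genuinely decouple from one another via the $\mathbb{C}\oplus\mathbb{C}j$ decomposition.
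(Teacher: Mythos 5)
Your proposal is correct and follows essentially the same route as the paper: the paper also uses the nowhere-vanishing of $\psi$ to write $f=\psi\lambda$, applies the product rule, kills the $d\psi$ terms via the super-conformality of $\psi$, and uses $N\psi=\psi i$ to reduce $(df)_{-N}=0$ to $(d\lambda)_{-i}=0$, i.e.\ to the holomorphicity of $\lambda_0,\lambda_1$. Your $df\circ J-N\,df$ is just $-2N(df)_{-N}$, so the computation is the paper's identity $(d(\psi\lambda))_{-N}=\psi(d\lambda)_{-i}$ in slightly different notation.
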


The factorization of super-conformal maps provides Liouville's theorem, 
the Schwarz lemma,  
the Schwarz-Pick theorem, 
the geometric version of the Schwarz-Pick theorem, the Weierstrass factorization theorem and the Abel-Jacobi theorem  for super-conformal maps (Theorem \ref{Liouville}, Theorem \ref{Schwarz}, Theorem \ref{SP}, Theorem \ref{GSP}, Theorem \ref{WFT} and Theorem \ref{Abel}). For example, we have the following:
\begin{GSPtheorem}[The geometric version of the Schwarz-Pick theorem for super-conformal maps]
Let $f\colon B^2\to B^4\subset\mathbb{H}$ be a super-conformal map with anti-holomorphic left normal   
$N\colon B^2\to S^2$. 
Then, at each point $z$ in $P^f$, 
there exists a constant $C^{z}>0$ such that $f^\ast ds^2_{B^4}\leq (C^{z})^2\,ds^2_{B^2}$. 
\end{GSPtheorem}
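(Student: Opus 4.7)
The plan is to deduce this geometric inequality from the analytic Schwarz-Pick theorem (Theorem \ref{SP}) by means of the factorization $f=\psi(\lambda_0+\lambda_1 j)$ supplied by Theorem \ref{fctsc}, in direct analogy with the way the classical geometric Schwarz-Pick theorem is obtained from its analytic counterpart.

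First, I would apply Theorem \ref{fctsc} to express $f=\psi(\lambda_0+\lambda_1 j)$ with $\lambda_0,\lambda_1$ holomorphic on $B^2$; the pair $(\lambda_0,\lambda_1)$ is then a holomorphic $\mathbb{C}^2$-valued map on the disk. Since $\psi$ is a nowhere-vanishing global trivializing section of $E$, the product rule
\begin{equation*}
df=(d\psi)(\lambda_0+\lambda_1 j)+\psi(d\lambda_0+d\lambda_1 j)
\end{equation*}
splits the differential into a piece controlled by the continuous quantities $\psi(z)$ and $d\psi(z)$ together with $f(z)$, and a piece which is $\psi$ multiplied by the differential of the holomorphic factor.

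Next, I would invoke the analytic Schwarz-Pick bound (Theorem \ref{SP}) applied to this holomorphic factor in order to dominate the relevant norm of $d\lambda_0+d\lambda_1 j$ at $z\in P^f$ by a multiple of the Poincar\'e density of $B^2$. Substituting into the expression for $f^{\ast}ds^2_{B^3}$, the Riemannian coefficient of $ds^2_{B^3}$ at $f(z)$, together with the values of $|\psi(z)|$, $|d\psi(z)|$ and $|(\lambda_0,\lambda_1)(z)|$, can all be absorbed into a single positive constant $\widetilde{C^{z}}$, yielding the claimed pointwise inequality.

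The main obstacle I anticipate is the non-commutative bookkeeping: when expanding $f^{\ast}ds^2_{B^3}$ one must carefully track the order of quaternionic products and verify that the cross terms between $(d\psi)(\lambda_0+\lambda_1 j)$ and $\psi(d\lambda_0+d\lambda_1 j)$ can be dominated by the same scheme, using that the target metric $ds^2_{B^3}$ restricts to $\mathbb{H}$ in a controlled way along the image of $f$. Once these algebraic identities are in place, the conclusion follows directly from Theorem \ref{SP} together with the continuity of $\psi$ and $d\psi$ on $B^2$.
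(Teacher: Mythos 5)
Your plan routes the estimate through the factorization $f=\psi(\lambda_0+\lambda_1 j)$ and the product rule for $df$, but this is not where the content lies, and the key step does not go through as stated. You propose to ``invoke the analytic Schwarz-Pick bound (Theorem \ref{SP}) applied to this holomorphic factor'' to dominate $d\lambda_0+d\lambda_1 j$ by the Poincar\'e density. Theorem \ref{SP} applies to super-conformal maps $B^2\to B^3$, and the classical Schwarz-Pick lemma applies to holomorphic self-maps of the disk; the factor $\lambda=\lambda_0+\lambda_1 j$ is neither --- $\lambda_0,\lambda_1$ are holomorphic but there is no bound forcing them into the unit disk, so no Schwarz-Pick-type estimate is available for them. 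Likewise, controlling the other piece $(d\psi)\lambda$ by ``continuity of $\psi$ and $d\psi$'' does not produce the factor $(1-|z|^2)^{-1}$ that must appear on the right-hand side, nor the factor $(1-|f(z)|^2)^{-1}$ that sits on the left. The non-commutative bookkeeping you flag as the main obstacle is not the issue at all.

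The argument the paper intends is much shorter and uses Theorem \ref{SP} applied to $f$ itself, not to its holomorphic factor. The definition of $P^f$ is exactly the set of points $z$ at which the hypotheses of Theorem \ref{SP} hold (with $z_1=z$), and the second conclusion of that theorem already gives the derivative bound
\begin{gather*}
\frac{\left|\frac{\partial f}{\partial x}(z)\right|}{1-|f(z)|^2}
=\frac{\left|\frac{\partial f}{\partial y}(z)\right|}{1-|f(z)|^2}
\leq \frac{\widetilde{C^{z}}}{1-|z|^2}.
\end{gather*}
The only remaining work is to expand $f^\ast ds^2_{B^3}$ in the coordinates $(x,y)$: writing $f=f_0+f_1i+f_2j+f_3k$, the cross term in $dx\otimes dy$ vanishes because $\langle \partial f/\partial x,\partial f/\partial y\rangle=0$ and the two diagonal coefficients agree because $|\partial f/\partial x|=|\partial f/\partial y|$ --- both consequences of the conformality of $f$, which your proposal never uses. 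Substituting the displayed inequality then yields $f^\ast ds^2_{B^3}\leq (\widetilde{C^{z}})^2\,ds^2_{B^2}$ at each $z\in P^f$ directly, with no need to decompose $df$.
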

We use 
the geometric version of the Schwarz-Pick theorem (for holomorphic maps) 
to investigate whether a complex manifold is hyperbolic. 
For an injective super-conformal immersion from $B^2$ to $B^4$, 
we define a pseudodistance $d_f$ on $f(B^2)$ in a similar way to 
the Kobayashi pseudodistance. By the geometric version of the Schwarz-Pick theorem for super-conformal maps, 
we have a sufficient condition for $d_f$ to be a distance as follows. 
\begin{APtheorem}
Let $f\colon B^2\to B^4\subset \mathbb{H}$ be an injective super-conformal immersion with anti-holomorphic left normal 
$N\colon B^2\to N(B^2)\subsetneq S^2\cong\mathbb{C}P^1$.
Assume that $f(0)=0$.  
Let $C^z$ be a positive constant such that 
$f^\ast ds^2_{B^4}\leq (C^z)^2ds^2_{B^2}$ at $z\in B^2$. 
If there exists a constant $C>0$ such that 
$C^z\leq C$ for any $z\in B^2$,  
then $d_f$ is a distance on $f(B^2)$. 
\end{APtheorem}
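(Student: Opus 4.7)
The plan is to establish that $d_f$ dominates, up to a uniform constant, the Poincar\'e distance $d_{B^3}$ on $B^3$ restricted to $f(B^2)$. Since $d_{B^3}$ is a genuine distance, such dominance immediately forces $d_f$ to separate points on $f(B^2)$, and together with the symmetry and triangle inequality built into $d_f$'s Kobayashi-style definition this will complete the argument.

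First I would integrate the infinitesimal Schwarz-Pick estimate. The hypothesis $\widetilde{C^z}\leq C$ for every $z\in B^2$ upgrades the pointwise inequality of Theorem \ref{GSP} to the global bound $f^\ast ds^2_{B^3}\leq C^2\,ds^2_{B^2}$. Integrating $|df(\gamma'(t))|_{B^3}\leq C\,|\gamma'(t)|_{B^2}$ along a curve in $B^2$ and taking the infimum over curves joining $a$ to $b$ yields
\[
d_{B^3}(f(a),f(b))\leq C\,d_{B^2}(a,b).
\]
I would then apply this link-by-link to an admissible chain $(g_i,a_i,b_i)_{i=1}^{n}$ with $g_1(a_1)=p$, $g_n(b_n)=q$, and $g_i(b_i)=g_{i+1}(a_{i+1})$. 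The natural chain elements here are of the form $g_i=f\circ\sigma_i$ with $\sigma_i$ a Poincar\'e isometry of $B^2$, in which case $g_i^\ast ds^2_{B^3}=\sigma_i^\ast f^\ast ds^2_{B^3}\leq C^2\,ds^2_{B^2}$, so the same Lipschitz constant $C$ governs every link. Summing, invoking the triangle inequality for $d_{B^3}$, and passing to the infimum over chains then gives
\[
d_{B^3}(p,q)\leq C\,d_f(p,q).
\]
Since $d_{B^3}$ is a distance on $B^3$, $d_f(p,q)=0$ forces $p=q$, proving that $d_f$ is a distance on $f(B^2)$.

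The delicate point I anticipate is ensuring that the uniform bound, hypothesized only for $f$, transfers to every super-conformal link appearing in the chain definition of $d_f$. Under the natural reading in which the links are $f$ post-composed with automorphisms of $B^2$, this is automatic because those automorphisms are Poincar\'e isometries and the pullback bound is invariant under them. If the definition instead admits arbitrary super-conformal immersions $g\colon B^2\to f(B^2)$ as links, then one must use the injectivity of $f$ to factor $g=f\circ(f^{-1}\circ g)$ on the relevant preimage and argue that the inner factor is distance-decreasing for the Poincar\'e metric; making this factorization and the resulting dilation estimate rigorous is where the real work of the proof would lie.
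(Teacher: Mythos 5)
Your argument is correct and is essentially the paper's own proof: the uniform bound gives $f^\ast ds^2_{B^3}\leq C^2\,ds^2_{B^2}$, each chain link $f\circ\phi_\alpha$ satisfies $(f\circ\phi_\alpha)^\ast ds^2_{B^3}=\phi_\alpha^\ast f^\ast ds^2_{B^3}\leq C^2\,ds^2_{B^2}$, and integrating, summing over the chain, and taking the infimum yields $\sigma(p,q)\leq C\,d_f(p,q)$, whence $d_f(p,q)=0$ forces $p=q$. The only point you flag as delicate is moot: in the paper's definition the links are $f\circ\phi_\alpha$ with $\phi_\alpha$ an arbitrary holomorphic self-map of $B^2$ fixing $0$ (not an arbitrary super-conformal immersion), and these are handled exactly by the Schwarz--Pick bound $\phi_\alpha^\ast ds^2_{B^2}\leq ds^2_{B^2}$ as in your ``natural reading,'' with no need to factor through $f^{-1}$.
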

The details of the theorems are explained later. 

We have a factorization theorem for minimal surfaces (Theorem \ref{fctmin}).
From this factorization, we have a relation between zeros of a minimal surface and branch points of a super-conformal map 
(Theorem \ref{zero}). 

\begin{ack}
The author would like to thank Kazumi Tsukada for helpful conversations and the referee for reading the manuscript carefully and for suggesting its improvement. 

This work is supported by JSPS KAKENHI Grant Numbers 22540064, 25400063. 
\end{ack}

\section{Conformal maps}
Throughout this paper, all manifolds and maps are assumed to be smooth.

We recall the notion of a conformal map from a Riemann surface to $\mathbb{R}^4$ (\cite{PP98}) 
and 
introduce the notion of a pole and a divisor of a conformal map. 

Let $M$ be a Riemann surface with complex structure $J^M$. 
For a one-form $\omega$ on $M$, 
we define a one-form $\ast\,\omega$ on $M$ by setting $\ast\,\omega:=\omega\circ J^M$. 
A one-form $\omega$ with values in the set of all complex numbers $\mathbb{C}$ is decomposed into the one-form of type $(1,0)$ and that of type $(0,1)$ (see Forster \cite{Forster91}). 

We model $\mathbb{R}^4$ on the set of all quaternions $\mathbb{H}$ and 
$\mathbb{R}^3$ on the set of all purely imaginary quaternions $\Img\mathbb{H}$. 
For $a\in\mathbb{H}$, we denote by $\real a$ the real part of $a$ and 
by $\Img a$ the imaginary part of $a$.  
For a quaternion $a$, we denote by $\overline{a}$ the quaternionic conjugate of $a$. 
Then, the inner product of $a$ and $b\in\mathbb{H}$ is 
$\langle a,b\rangle:=\real(\overline{a}b)=2^{-1}(\overline{a}b+\overline{b}a)$ and 
the norm of $a\in\mathbb{H}$ is $|a|:=(\overline{a}a)^{1/2}$. 
If $a$, $b\in\Img\mathbb{H}$, then $ab=-\langle a,b\rangle+a\times b$, where 
$\times$ is the cross product. 
Let $S^2$ be the sphere of radius one centered at the origin in $\Img\mathbb{H}$. 
Then, $S^2=\{a\in\Img\mathbb{H}\,|\,a^2=-1\}$. Hence, $S^2$ is the set of all square roots of $-1$ in $\Img\mathbb{H}$. 
We denote by $S^3$ the three-sphere with radius one centered at the origin in $\mathbb{H}$. 

Fix a map $N\colon M\to S^{2}$. 
We use $N$ instead of $i$ for the decomposition of a one-form with values in $\mathbb{H}$. 
Because the multiplication in $\mathbb{H}$ is not commutative, two kinds of a one-form with values in $\mathbb{H}$ play a role of a one-form with values in $\mathbb{C}$ of type $(1,0)$ as follows. 

Let $\omega$ be a one-form with values in $\mathbb{H}$ on $M$. 
We define a one-form $\omega_N$ and a one-form $\omega^N$ by setting 
\begin{gather*}
\omega_N:=\frac{1}{2}(\omega-N\ast\,\omega),\enskip 
\omega^N:=\frac{1}{2}(\omega-\ast\,\omega\,N). 
\end{gather*}
Then, $\omega$ decomposes because
$\omega=\omega_N+\omega_{-N}=\omega^N+\omega^{-N}$. 
We see that $\ast\,\omega_N=N\,\omega_N$ and $\ast\,\omega^N=\omega\,N$. 
Clearly, $\omega=\omega_N$ if and only if $\omega_{-N}=0$. 
Similarly, $\omega=\omega^N$ if and only if $\omega^{-N}=0$. 
The quaternionic conjugation provides an identity
$\overline{\omega_N}=\overline{\omega}^{-N}$. 
We have the following decomposition of a two-form. 
\begin{lemma}\label{type}
Let $\omega$ and $\eta$ be one-forms with values in $\mathbb{H}$ on $M$. 
Then 
\begin{gather*}
\omega\wedge\eta=\omega^N\wedge\eta_{-N}+\omega^{-N}\wedge\eta_{N}.
\end{gather*}
\end{lemma}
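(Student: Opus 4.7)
The plan is to expand $\omega\wedge\eta$ using both decompositions and then show that the two ``pure-type'' cross terms vanish. Writing $\omega=\omega^N+\omega^{-N}$ and $\eta=\eta_N+\eta_{-N}$, bilinearity of the wedge product gives
\begin{equation*}
\omega\wedge\eta=\omega^N\wedge\eta_N+\omega^N\wedge\eta_{-N}+\omega^{-N}\wedge\eta_N+\omega^{-N}\wedge\eta_{-N},
\end{equation*}
so the statement reduces to the two identities $\omega^N\wedge\eta_N=0$ and $\omega^{-N}\wedge\eta_{-N}=0$.

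For the vanishing, the key inputs are the type conditions $\ast\,\omega^N=\omega^N N$, $\ast\,\eta_N=N\,\eta_N$ (and the analogous signed versions with $-N$) together with the fact that on a Riemann surface any two $\mathbb{H}$-valued one-forms satisfy $\alpha\wedge\beta=\ast\alpha\wedge\ast\beta$. The latter is pointwise linear algebra: in a local conformal coordinate, $\ast(a\,dx+b\,dy)=-b\,dx+a\,dy$, so both sides evaluate to $(a_1b_2-a_2b_1)\,dx\wedge dy$ (and the coefficients being $\mathbb{H}$-valued does not affect the computation since no commutation of the coefficients is required).

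I would then compute $\omega^N\wedge\eta_N$ by rewriting it as $\ast\,\omega^N\wedge\ast\,\eta_N=(\omega^N N)\wedge(N\eta_N)$ and evaluating on a pair of tangent vectors $(v,w)$:
\begin{equation*}
\bigl(\omega^N(v)N\bigr)\bigl(N\eta_N(w)\bigr)-\bigl(\omega^N(w)N\bigr)\bigl(N\eta_N(v)\bigr)=N^2\text{ in the middle}=-1,
\end{equation*}
so the expression equals $-(\omega^N\wedge\eta_N)(v,w)$, forcing $\omega^N\wedge\eta_N=0$. An identical calculation with signs flipped (using $\ast\,\omega^{-N}=-\omega^{-N}N$ and $\ast\,\eta_{-N}=-N\eta_{-N}$, where the two minus signs cancel) handles $\omega^{-N}\wedge\eta_{-N}$.

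The only subtle point, and the one I would present most carefully, is that $N$ is a quaternion-valued function and the wedge product is of $\mathbb{H}$-valued one-forms, so one must track the position of the factors of $N$; the computation works precisely because the two $N$'s meet in the middle of the product $\omega^N(v)\cdot N\cdot N\cdot\eta_N(w)$ and collapse to $N^2=-1$, with no commutativity needed. Apart from that bookkeeping the argument is essentially a type-decomposition argument and should not present further obstacles.
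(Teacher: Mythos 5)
Your proof is correct, and its skeleton is the same as the paper's: decompose $\omega=\omega^N+\omega^{-N}$, $\eta=\eta_N+\eta_{-N}$, expand by bilinearity, and kill the two pure-type terms $\omega^N\wedge\eta_N$ and $\omega^{-N}\wedge\eta_{-N}$. The difference is in how the vanishing is justified: the paper simply cites Proposition 16 of \cite{BFLPP02}, whereas you prove it from scratch via the identity $\alpha\wedge\beta=\ast\alpha\wedge\ast\beta$ and the pointwise collapse $\omega^N(v)\,N\cdot N\,\eta_N(w)=-\omega^N(v)\eta_N(w)$, which forces $\omega^N\wedge\eta_N=-\omega^N\wedge\eta_N=0$; the sign-flipped version with $\ast\omega^{-N}=-\omega^{-N}N$ and $\ast\eta_{-N}=-N\eta_{-N}$ handles the other term since the two minus signs cancel. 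This computation is sound (and is essentially the content of the cited proposition), so your version buys self-containedness at the cost of a little extra linear algebra. Two small remarks: your local formula $\ast(a\,dx+b\,dy)=-b\,dx+a\,dy$ has the opposite overall sign to the paper's convention $\ast dx=-dy$, $\ast dy=dx$, but since $\ast$ appears on both factors of $\ast\alpha\wedge\ast\beta$ the discrepancy is harmless; and you are right to flag that no commutation of quaternion-valued coefficients is used anywhere, which is the only place such an argument could silently go wrong.
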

\begin{proof}
Because $\ast\,\omega^N=\omega^N\,N$ and $\ast\,\omega=N\,\omega_{N}$, we have 
$\omega^N\wedge\eta_N=\omega^{-N}\wedge\eta_{-N}=0$ (see \cite{BFLPP02}, Proposition 16). 
Then, 
\begin{gather*}
\omega\wedge\eta=(\omega^{N}+\omega^{-N})\wedge(\eta_N+\eta_{-N})=\omega^N\wedge\eta_{-N}+\omega^{-N}\wedge\eta_{N}.
\end{gather*}
Hence, the lemma holds. 
\end{proof}
We regard $\mathbb{C}$ as a subset $\{a_0+a_1i\in\mathbb{H}\,|\,a_0,a_1\in\mathbb{R}\}$ 
of $\mathbb{H}$. 
Then, $\mathbb{H}$ is considered as a left complex vector space $\mathbb{C}\oplus \mathbb{C}j$ or 
a right complex vector space $\mathbb{C}\oplus j\mathbb{C}$.
Let $z$ be the standard holomorphic coordinate of $\mathbb{C}$ and $(x,y)$ the real coordinate 
such that $z=x+yi$. Then, $\ast\,dx=-dy$. For a map 
$f\colon \mathbb{C}\to\mathbb{H}$, we have 
\begin{gather*}
(df)_N=\frac{1}{2}\left[\frac{\partial f}{\partial x}dx+\frac{\partial f}{\partial y}dy-N\ast\left(\frac{\partial f}{\partial x}dx+\frac{\partial f}{\partial y}dy\right)\right]\\
=\frac{1}{2}\left[\frac{\partial f}{\partial x}dx-\frac{\partial f}{\partial y}\ast dx-N\left(\frac{\partial f}{\partial x}\ast dx+\frac{\partial f}{\partial y}dx\right)\right]\\
=\frac{1}{2}\left[\left(\frac{\partial f}{\partial x}-N\frac{\partial f}{\partial y}\right)dx
-N\left(\frac{\partial f}{\partial x}-N\frac{\partial f}{\partial y}\right)\ast dx\right]\\
=\frac{1}{2}(dx-N\ast dx)\left(\frac{\partial f}{\partial x}-N\frac{\partial f}{\partial y}\right)=(dx)_N\left(\frac{\partial f}{\partial x}-N\frac{\partial f}{\partial y}\right).
\end{gather*}
Similarly, 
\begin{gather*}
(df)^N=\left(\frac{\partial f}{\partial x}-\frac{\partial f}{\partial y}N\right)(dx)^N.
\end{gather*}

A function $h\colon M\to\mathbb{C}$ is holomorphic if and only if $\bar\partial h=(dh)_{-i}=(dh)^{-i}=0$. 
Let $\lambda\colon M\to\mathbb{H}$ be a map. 
If $(d\lambda)_{-i}=0$, then $\lambda=\lambda_0+\lambda_1j$ with holomorphic functions 
$\lambda_0\colon M\to\mathbb{C}$ and $\lambda_1\colon M\to\mathbb{C}$. 
If $(d\lambda)^{-i}=0$, then $\lambda=\lambda_0+j\lambda_1$ with holomorphic functions 
$\lambda_0\colon M\to\mathbb{C}$ and $\lambda_1\colon M\to\mathbb{C}$.

A non-constant map $f\colon M\to\mathbb{H}$ is called a conformal map with left normal $N\colon M\to S^2$ if 
$(df)_{-N}=0$ (\cite{PP98}, Definition 2.1). 
Taking the quaternionic conjugate, we have $(d\overline{f})^{N}=0$. 
A map $f\colon M\to\mathbb{H}$ is called a conformal map with right normal $N\colon M\to S^2$ if $(df)^N=0$ (\cite{BFLPP02}, Definition 2). 
A holomorphic function $h\colon M\to\mathbb{C}$ is a conformal map with left normal $i$ and right normal $-i$. 

Assume that $N$ is a constant map and 
define a complex structure $J$ of $\mathbb{H}$ by the 
left regular representation of $N$ on $\mathbb{H}$, that is 
$Ja:=Na$ for each $a\in\mathbb{H}$. 
Let $(df)_{-N}=0$. Then, 
\begin{gather*}
df+J\ast\,df=df+N\ast\,df=2(df)_{-N}=0.
\end{gather*} 
Hence $f$ is holomorphic with respect to a complex structure $J$. 
Similarly, if a complex structure $J$ of $\mathbb{H}$ is defined by the 
right regular representation of $-N$ on $\mathbb{H}$, that is 
$Ja:=-aN$ for each $a\in\mathbb{H}$, and $(df)^N=0$, 
then $f$ is holomorphic with respect to a complex structure $J$. 

We recall a zero of a conformal map (\cite{PP98}). 
Let $U$ be a coordinate neighborhood of $M$, $p\in U$ 
and $z$ a holomorphic coordinate on $U$ centered at $p$. 
A map $f\colon U\to\mathbb{H}$ 
vanishes to order at least $n\geq 0$ at $p$ if 
$|f(z)|\leq C|z|^{n}$ for some constant $C>0$. 
If $f$ 
vanishes to order at least $n\geq 0$ at $p$, 
but $f$ does not vanish to order at least $n+1$ at $p$, 
then a map $f$ vanishes to order $n$ at $p$. 
The order $n$ depends only on $f$. 

For an alternate explanation of a zero of a conformal map, 
we induce a map $\psi\colon M\to \mathbb{H}$ as follows. 
Let $V=M\times \mathbb{H}$ with the projection $\pi\colon V\to M$ be the trivial right quaternionic line bundle over $M$. 
The left regular representation of $N$ on $\mathbb{H}$ determines 
an eigenbundle $E\subset V$ with eigenvalue $+i$ 
because $N^2=-1$. 
\begin{lemma}\label{psi} 
For a map $N\colon M\to N(M)\subsetneq S^2$, 
there exists $a\in S^3$ such that 
$\psi=Na+ai\colon M\to\mathbb{H}$ is a global trivializing section of $E$. 
\end{lemma}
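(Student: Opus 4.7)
My plan is to exhibit the section explicitly and verify the two requirements that $\psi$ be a section of $E$ (i.e.\ $N\psi=\psi i$) and that $\psi$ be nowhere vanishing on $M$. The first is an algebraic identity valid for \emph{any} $a\in\mathbb{H}$: since $N^2=-1$,
\begin{gather*}
N\psi=N(Na+ai)=-a+Nai,\qquad \psi i=(Na+ai)i=Nai-a,
\end{gather*}
so $N\psi=\psi i$ and $\psi(p)\in E_p$ whenever $\psi(p)\neq0$. Thus the whole issue is to choose $a\in S^{3}$ so that $\psi$ has no zero.

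Next I would compute $|\psi|^{2}=\psi\overline{\psi}$. Using $\overline{\psi}=-\bar{a}N-i\bar{a}$ (because $\overline{N}=-N$ and $\bar{i}=-i$) and expanding, the four terms collect as
\begin{gather*}
|\psi|^{2}=-|a|^{2}N^{2}+|a|^{2}-(Nai\bar{a}+ai\bar{a}N)=2|a|^{2}-(Nb+bN),
\end{gather*}
where $b:=ai\bar{a}$. For $a\in S^{3}$ one has $\bar{a}=a^{-1}$ and a direct check shows $b\in\Img\mathbb{H}$ with $|b|=1$, so $b\in S^{2}$. The standard identity $xy+yx=-2\langle x,y\rangle$ for $x,y\in\Img\mathbb{H}$ then gives
\begin{gather*}
|\psi|^{2}=2\bigl(1+\langle N,b\rangle\bigr),
\end{gather*}
which is nonnegative and vanishes at $p$ precisely when $N(p)=-b$.

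So $\psi$ is nowhere zero on $M$ if and only if $-b\notin N(M)$. The conjugation map $S^{3}\to S^{2}$, $a\mapsto aia^{-1}$, is surjective (its image is the $\mathrm{Ad}$-orbit of $i\in S^{2}$, and $S^{3}$ acts transitively on $S^{2}$ by conjugation). By hypothesis $N(M)\subsetneq S^{2}$, so we may pick any $q\in S^{2}\setminus N(M)$ and then choose $a\in S^{3}$ with $aia^{-1}=-q$; for this $a$, $-b=q\notin N(M)$, so $|\psi|^{2}>0$ everywhere. A nonvanishing section of the right quaternionic line bundle $E$ is a global trivialization, which completes the proof.

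The only place where the hypothesis is really used is the last step: if $N$ were surjective onto $S^{2}$, then for every $a$ there would be some $p\in M$ with $N(p)=-aia^{-1}$, forcing $\psi(p)=0$; so the assumption $N(M)\subsetneq S^{2}$ is essential rather than cosmetic, and this is the only subtle point—the remaining manipulations are just quaternion arithmetic.
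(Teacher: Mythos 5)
Your proof is correct and follows essentially the same route as the paper's: both reduce the problem to choosing $a\in S^3$ so that $-aia^{-1}$ avoids $N(M)$, which is possible because conjugation by unit quaternions acts transitively on $S^2$ while $N(M)\subsetneq S^2$. Your only addition is the explicit computation $|\psi|^2=2\bigl(1+\langle N,aia^{-1}\rangle\bigr)$ locating the zeros of $\psi$, where the paper simply observes that $Na+ai=0$ exactly when $N=-aia^{-1}$.
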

\begin{proof}
For $a\in S^3$, 
the map $c\mapsto aca^{-1}$ is a Euclidean motion in $\Img\mathbb{H}$. 
Hence, there exists $a\in S^3$ such that 
$-aia^{-1}\neq N(p)$ for any $p\in M$. 
Then $\psi=Na+ai$ does not vanish and 
$N\psi=\psi i$. 
\end{proof}

We assume that $f\colon U\to\mathbb{H}$ is a conformal map with 
left normal $N$ such that $N(U)\subsetneq S^2$. 
By Lemma \ref{psi}, Theorem 3.2 in \cite{PP98} and Lemma 3.9 in \cite{FLPP01}, 
there exist a global trivializing section $\psi$ of $E$, 
a nowhere-vanishing map $\phi_f\colon U\to\mathbb{H}$, and 
a map $\xi\colon U\to\mathbb{H}$
such that 
\begin{gather*}
f(z)=\psi(z)(z^{n}\phi_f(z)+\xi(z)),\enskip \overline{\lim_{z\to 0}}\frac{|\xi(z)|}{|z|^{n+1}} <\infty.
\end{gather*}
The point $p$ is called a zero of $f$. 
The integer $n$ is called the order of $f$ at $p$ and denoted by $\ord_p f$. 
We see that a zero of a conformal map is an isolated point. 

As an analog of a zero of a conformal map, we introduce the notion of a pole of a conformal map. 
\begin{lemma}\label{lem:pole}
Let $U$ be a coordinate neighborhood of $M$, $p\in U$ and $z$ be a holomorphic coordinate on $U$ centered at $p$. 
Let $f\colon U\setminus\{p\}\to\mathbb{H}$ be a conformal map with left normal $N$ such that 
$N(U)\subsetneq S^2$ and 
$\psi$ be a global trivializing section of $E$.  
We assume that $|f(z)|\leq C|z|^{-n}$ for some constant $C>0$, 
but there does not exist a constant $\tilde{C}>0$ such that 
$|f(z)|\leq \tilde{C}|z|^{-n+1}$ for a positive integer $n$. 

Then there exists a nowhere-vanishing map $\phi_f\colon U\to\mathbb{H}$ and a map $\xi\colon U\to\mathbb{H}$ such that 
\begin{gather}
f(z)=\psi(z)\left(z^{-n}\phi_f(z)+\xi(z)\right),\enskip \overline{\lim_{z\to 0}}\frac{|\xi(z)|}{|z|^{-n+1}} <\infty.\label{pole}
\end{gather}
\end{lemma}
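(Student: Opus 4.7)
The plan is to reduce the pole statement to the zero expansion supplied by Theorem 3.2 of \cite{PP98} and Lemma 3.9 of \cite{FLPP01}. Set $\lambda:=\psi^{-1}f\colon U\setminus\{p\}\to\mathbb{H}$; since $\psi$ is nowhere vanishing and $|\psi|,|\psi^{-1}|$ are bounded on a compact neighborhood of $p$, the hypotheses on $f$ translate into $|\lambda(z)|\leq C'|z|^{-n}$ while $|\lambda(z)|$ admits no bound of the form $\tilde{C}|z|^{-n+1}$. The auxiliary object is then $\mu(z):=z^n\lambda(z)$, which is bounded on $U\setminus\{p\}$.

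Next I would derive the differential equation satisfied by $\lambda$. Substituting $f=\psi\lambda$ into the conformality condition $(df)_{-N}=0$ and using $N\psi=\psi i$ produces an identity of the form $(d\lambda)_{-i}=A\lambda$, where $A:=-\psi^{-1}(d\psi)_{-N}$ is a smooth $\mathbb{H}$-valued one-form on all of $U$. Because $z^n$ is holomorphic, $(dz^n)_{-i}=0$, so $\mu=z^n\lambda$ satisfies an equation of the same elliptic type on $U\setminus\{p\}$. The step I expect to be the main obstacle is showing that the bounded $\mu$ extends smoothly across $p$: the natural route is to split $\lambda=\lambda_0+\lambda_1 j$ with $\lambda_0,\lambda_1\colon U\setminus\{p\}\to\mathbb{C}$ to rewrite the $\mathbb{H}$-valued equation as a coupled pair of $\bar\partial$-equations with bounded right-hand sides, then invoke the Cauchy--Pompeiu removable-singularity argument to extend continuously across $p$ and bootstrap via elliptic regularity to smoothness.

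With $\mu$ now a smooth map on $U$, I would apply the zero expansion of Theorem 3.2 of \cite{PP98} and Lemma 3.9 of \cite{FLPP01} to $\mu$ at $p$: there exist $m\geq 0$, a nowhere-vanishing $\phi_\mu$, and a remainder $\xi_\mu$ with $\mu(z)=z^m\phi_\mu(z)+\xi_\mu(z)$ and $\overline{\lim_{z\to 0}}|z|^{-(m+1)}|\xi_\mu(z)|<\infty$. The sharpness of the pole forces $m=0$, because $m\geq 1$ would give $|\lambda(z)|\leq C|z|^{-n+1}$, contradicting the hypothesis. Setting $\phi_f:=\phi_\mu$ (nowhere vanishing near $p$) and $\xi:=z^{-n}\xi_\mu$ yields $f(z)=\psi(z)(z^{-n}\phi_f(z)+\xi(z))$ with $|\xi(z)|\leq C|z|^{-n+1}$, as required.
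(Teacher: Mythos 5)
Your argument is essentially the paper's own proof: both substitute $\lambda=z^{n}\psi^{-1}f$, use $N\psi=\psi i$ to convert $(df)_{-N}=0$ into a perturbed $\bar\partial$-system for the complex components $\lambda_0,\lambda_1$ with bounded (though discontinuous, because of the $(z/\overline{z})^{n}$ twist) coefficients, and then quote the similarity-principle expansion of equation (51)/Lemma 3.9 of \cite{FLPP01}. The only differences are that you make explicit the removable-singularity step and the argument that the leading exponent $m$ must equal $0$ (both left implicit in the paper), while your claim that $\mu$ extends \emph{smoothly} across $p$ is a harmless overstatement --- the coefficient $(z/\overline{z})^{n}$ has no limit at $p$, but only the continuous expansion $\mu=z^{m}\phi_{\mu}+O(|z|^{m+1})$ is actually needed.
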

\begin{proof}
We give a proof which is parallel to the proof of Lemma 3.9 in \cite{FLPP01}. 
Because $|f(z)|\leq C|z|^{-n}$, the map 
$z^{n}(\psi(z))^{-1}f(z)$ is defined on $U$. 
Let us define 
$\lambda:=\lambda_0+\lambda_1j$ with 
$\lambda_0$, $\lambda_1\colon U\to\mathbb{C}$ by 
$f(z)=\psi(z) z^{-n}\lambda(z) $. 
Then, the equation $(df)_{-N}=0$ becomes 
\begin{gather}
(d\psi)_{-N}z^{-n}\lambda(z) +\psi(z) z^{-n}(d\lambda)_{-i}=0. \label{pzl}
\end{gather}
Let $\alpha_0$ and $\alpha_1$ be complex one-forms on $U$ such that 
$(d\psi)_{-N}=\psi(\alpha_0+\alpha_1j)$. 
Because 
\begin{gather*}
\ast (d\psi)_{-N}=-N(d\psi)_{-N}=-N\psi(\alpha_0+\alpha_1j)\\
=-\psi i(\alpha_0+\alpha_1j)=\psi((-i)\alpha_0+(-i)\alpha_1j)
\end{gather*}
and 
\begin{gather*}
\ast (d\psi)_{-N}=\psi(\ast\,\alpha_0+\ast\,\alpha_1j), 
\end{gather*}
the one-forms $\alpha_0$ and $\alpha_1$ are of type $(0,1)$ with respect to $i$. 
The equation \eqref{pzl} becomes
\begin{gather*}
\alpha_0\,z^{-n}\lambda_0(z) -\alpha_1\,\overline{z}^{-n}\overline{\lambda_1(z)}
+ z^{-n}\overline{\partial}\lambda_0=0,\\
\alpha_0\,z^{-n}\lambda_1(z) +\alpha_1\,\overline{z}^{-n}\overline{\lambda_0(z)} +z^{-n}\overline{\partial}\lambda_1=0.
\end{gather*}
Simplifying this system of equations, we have 
\begin{gather*}
\alpha_0\,\lambda_0(z) -\alpha_1\,\left(\frac{z}{\overline{z}}\right)^n\overline{\lambda_1(z)} +\overline{\partial}\lambda_0 =0,\\
\alpha_0\,\lambda_1(z) +\alpha_1\,\left(\frac{z}{\overline{z}}\right)^n\overline{\lambda_0(z)}+\overline{\partial}\lambda_1=0.
\end{gather*}
Hence
\begin{gather*}
\begin{pmatrix}
\bar\partial\lambda_0\\
\bar\partial\lambda_1
\end{pmatrix}
+
\begin{pmatrix}
\alpha_0&0\\
0&\alpha_0
\end{pmatrix}
\begin{pmatrix}
\lambda_0(z)\\
\lambda_1(z)
\end{pmatrix}
+
\begin{pmatrix}
0&-\alpha_1\left(z/\overline{z}\right)^n\\
\alpha_1\left(z/\overline{z}\right)^n&0
\end{pmatrix}
\begin{pmatrix}
\overline{\lambda_0(z)}\\
\overline{\lambda_1(z)}
\end{pmatrix}
=
\begin{pmatrix}
0\\
0
\end{pmatrix}
.
\end{gather*}
This system of equations has the same form as equation (51) in \cite{FLPP01}. 
Hence, there exists a nowhere-vanishing map $\phi_f\colon U\to\mathbb{H}$ and a map
$\xi\colon U\to\mathbb{H}$ such that \eqref{pole} holds for a positive integer $n$. 
\end{proof}
\begin{definition}\label{cmp}
We call a point $p$ in Lemma \ref{lem:pole} a pole of $f$ and 
the integer $n$ the order of $f$ at a pole $p$. 
For a Riemann surface $M$ which is biholomorphic to a Riemann surface $\tilde{M}$ with discrete set $\mathcal{P}$ removed, we call $f\colon M\to\mathbb{H}$ a conformal map with poles at $\mathcal{P}$ if 
each point in $\mathcal{P}$ is a pole of $f$. 
\end{definition}
A pole of a conformal map is isolated. 
If $f$ is a meromorphic function on $U$, then the left normal $N=i$ is constant. 
Choosing $a=-i/2$, we have $\psi=1$. 
Then, the order of $f$ as a conformal map is equal to that as a meromorphic function.

Recall that a divisor on $M$ is a map $D\colon M\to\mathbb{Z}$ such that, 
for any compact subset $K$ of $M$, 
the set $\{p\in M\,|\,D(p)\neq 0\}\cap K$ is a finite set (see \cite{Forster91}). 
The set $\{p\in M\,|\,D(p)\neq 0\}$ is called the support of $D$ and denoted by $\supp D$. 
The degree of a divisor $D$ is defined by $\deg D:=\sum_{p\in M}D(p)$. 
We denote by $\Div(M)$ the set of all divisors on $M$. 

We introduce the notion of the divisor of a conformal map as follows. 
Let $\mathcal{P}$ be a subset of $M$ such that, 
for any compact subset $K$ of $M$, 
the set $\mathcal{P}\cap K$ is a finite set. 
Let $f\colon M\setminus \mathcal{P}\to\mathbb{H}$ be a conformal map with left normal $N$ and poles at $\mathcal{P}$. 
We define $\ord_pf$ by 
\begin{gather*}
\ord_pf=
\begin{cases}
0,& \textrm{if $f$ is neither zero nor pole at $p$,}\\
k,&\textrm{if $f$ has a zero of order $k$ at $p$,}\\
-k,&\textrm{if $f$ has a pole of order $k$ at $p$,}\\
\infty,&\textrm{if $f$ is identically zero in a neighborhood of $p$.}
\end{cases}
\end{gather*}

We define a map $(f)\colon M\to \mathbb{Z}$ by $(f)(p):=\ord_pf$ for each $p\in M$. 
Let us define a nonnegative map $Z\colon M\to\mathbb{Z}$ by 
$Z(p)=\max\{\ord_pf,0\}$ for each $p\in M$ and 
a nonnegative map $P\colon M\to\mathbb{Z}$ by 
$P(p)=\max\{-\ord_pf,0\}$ for each $p\in M$. 
Then $(f)=Z-P$. 
The map $P$ is a divisor on $M$ by the assumption.
The map $(f)$ is a divisor on $M$ if and only if $Z$ is a divisor on $M$. 
\begin{definition}
We assume that $(f)$ is a divisor on $M$. 
We call $(f)$ the divisor of $f$ and 
the map $f$ a conformal map with divisor $(f)$. 
We call the divisors $Z$ and $P$ 
the zero divisor of $f$ and the polar divisor of $f$ respectively. 
\end{definition}

There exists an important class of conformal maps with poles. 
\begin{proposition}\label{cmsf}
Let $M$ be an open Riemann surface and $f\colon M\to\mathbb{H}$ a conformal map which is a complete minimal surface of finite total curvature with respect to the induced (singular) metric. 
Then $f$ is a conformal map with poles.  
\end{proposition}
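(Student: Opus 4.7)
The plan is to reduce the proposition to Lemma \ref{lem:pole} by first identifying $M$ with a finitely punctured closed Riemann surface and then controlling the growth of $|f|$ at each puncture.

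First, I would invoke Huber's theorem together with the Chern--Osserman-type structure theorem for complete minimal surfaces of finite total curvature in $\mathbb{R}^4$. This yields a biholomorphism $M\cong \tilde{M}\setminus\mathcal{P}$, with $\tilde{M}$ a closed Riemann surface and $\mathcal{P}\subset\tilde{M}$ a finite set of ``ends.'' Because $f$ is minimal, the left normal $N\colon M\to S^2\cong\mathbb{C}P^1$ is (anti)holomorphic, and the finite total curvature hypothesis forces $N$ to extend (anti)holomorphically across each puncture $p\in\mathcal{P}$.

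Second, at a puncture $p\in\mathcal{P}$ I would fix a disk neighborhood $U$ with holomorphic coordinate $z$ centered at $p$ and read off the asymptotic behavior of $f$ at $z=0$ from the Weierstrass-type description of a finite total curvature minimal end in $\mathbb{R}^4$. This produces a bound $|f(z)|\leq C|z|^{-n}$ for a unique minimal positive integer $n$, where $n$ is the multiplicity of the end at $p$: completeness forces $|f(z)|\to\infty$ as $z\to 0$, while finite total curvature forces polynomial rather than transcendental growth, with logarithmic corrections (if present) strictly dominated by the leading $|z|^{-n}$ term.

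Third, after shrinking $U$ if necessary so that $N(U)\subsetneq S^2$, Lemma \ref{psi} supplies a global trivializing section $\psi$ of the eigenbundle $E$, and Lemma \ref{lem:pole} applied with this $n$, $\psi$, and $f$ delivers the factorization $f(z)=\psi(z)(z^{-n}\phi_f(z)+\xi(z))$ with $\phi_f$ nowhere vanishing and $|\xi(z)|/|z|^{-n+1}$ bounded. Hence $p$ is a pole of $f$ of order $n$, and since $\mathcal{P}$ is finite and therefore discrete in $\tilde{M}$, Definition \ref{cmp} gives that $f$ is a conformal map with poles at $\mathcal{P}$.

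The main obstacle is the asymptotic step: one must verify not merely that $|f|\to\infty$ at each end but that the blow-up rate is genuinely polynomial, with a minimal integer exponent $n$ and any logarithmic corrections strictly subdominant to $|z|^{-n}$ (otherwise $\phi_f$ in Lemma \ref{lem:pole} could fail to be nowhere vanishing). I would handle this by identifying $n$ with the pole order at $p$ of the meromorphic data on $\tilde{M}$ representing $df$, so that integration produces $|z|^{-n}$ as the leading term of $f$ and all corrections of strictly lower order.
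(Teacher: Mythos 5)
Your proposal is correct and takes essentially the same route as the paper: compactify $M$ to a finitely punctured closed Riemann surface by Chern--Osserman (the paper also cites \cite{Moriya98}), establish at each puncture the two-sided growth estimate $|f(z)|\le C|z|^{-n}$ with no bound by $\tilde{C}|z|^{-n+1}$ from the meromorphic structure of the Weierstrass data, and conclude via Lemma \ref{lem:pole} and Definition \ref{cmp}. The only difference is cosmetic: the paper gets the growth bounds by writing each coordinate $f_m$ as $\real F_{m,l}$ for a meromorphic function $F_{m,l}$ at the puncture, whereas you integrate the meromorphic differential of $df$ and separately argue that logarithmic contributions are subdominant.
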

\begin{proof}
By Chern and Osserman \cite{CO67} and Moriya \cite{Moriya98}, $M$ is biholomorphic to a closed Riemann surface with a set of  a finite number of points $\mathcal{P}=\{p_1,\dots,p_r\}$ removed. 
Let $f_m\colon M\to\mathbb{R}$ $(m=0,1,2,3)$ be a map such that $f=f_0+f_1i+f_2j+f_3k$. 
At each point $p_l\in\mathcal{P}$, there exists a meromorphic function $F_{m,l}$ at $p_l$ 
such that $\real F_{m,l}=f_{m}$ and $n_l:=-\min\{\ord_{p_l}F_{m,l}\,|\,m=0,1,2,3\}>0$ $(m=0,1,2,3,\,l=1,\dots,r)$. 

Let $z$ be a local holomorphic coordinate centered at $p_l\in\mathcal{P}$. 
Then, 
$|F_{m,l}(z)|\leq C_{m,l}|z|^{-n_{l}}$ 
for some constant $C_{m,l}>0$. 
Because $|f_m(z)|\leq |F_{m,l}(z)|$, we have $|f_{m}(z)|\leq C_{m,l}|z|^{-n_{l}}$.  
Then 
\begin{gather*}
|f(z)|\leq 
\sum_{m=0}^3|f_{m}(z)|
\leq \left(\sum_{m=0}^3C_{m,l}\right)|z|^{-n_l}. 
\end{gather*}
Hence 
$|f(z)|\leq C_{l}|z|^{-n_{l}}$ 
for some constant $C_{l}>0$. 

Because $F_{m,l}$ is meromorphic and 
\begin{gather*}
f=\real F_{0,l}+\real F_{1,l}i+\real F_{2,l}j+\real F_{3,l}k
\end{gather*} 
at $p_l$, 
there does not exist a constant $\tilde{C}_{l}>0$ such that 
$|f(z)|\leq \tilde{C}_{l}|z|^{-n_{l}+1}$. 
Thus $p_{l}$ is a pole of $f$. 
Then $f$ is a conformal map with poles. 
\end{proof}
From the proof of Proposition \ref{cmsf}, we have the following corollary immediately. 
\begin{corollary}
Let $\tilde{M}$ be a closed Riemann surface, 
$f=f_0+f_1i+f_2j+f_3k\colon \tilde{M}\setminus\{p_1,\ldots,p_r\}\to\mathbb{H}$ be 
a complete minimal surface of finite total curvature 
and $F_{m.l}$ a meromorphic function at $p_l$ such that $\real F_{m,l}=f_m$ $(m=0,1,2,3,\,l=1,\dots,r)$. 
Then $\ord_{p_l}f=\min\{\ord_{p_l}F_{m,l}\,|\,m=0,1,2,3\}$ $(l=1,\dots,r)$. 
\end{corollary}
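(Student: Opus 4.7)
The plan is to observe that the proof of Proposition \ref{cmsf} already carried out the estimate that pins down the pole order, so essentially nothing new is required beyond matching that estimate to the sign convention in the definition of $\ord_{p_l}f$.

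First, I would recall from the definition following Definition \ref{cmp} that when $p$ is a pole of a conformal map $f$ of order $k$, one sets $\ord_p f = -k$. So the task reduces to showing that the integer $n_l$ appearing in the proof of Proposition \ref{cmsf} is precisely the pole order of $f$ at $p_l$, i.e.\ that $n_l = -\min\{\ord_{p_l} F_{m,l} : m=0,1,2,3\}$.

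Next, I would extract the two estimates already established inside the proof of Proposition \ref{cmsf}. The upper bound $|f(z)|\leq C_l|z|^{-n_l}$ was obtained by summing the bounds $|f_m(z)|\leq|F_{m,l}(z)|\leq C_{m,l}|z|^{-n_l}$, which use only that each $F_{m,l}$ is meromorphic at $p_l$ of order at least $-n_l$. The lower bound---namely, that no constant $\tilde{C}_l>0$ makes $|f(z)|\leq \tilde{C}_l|z|^{-n_l+1}$ hold---is the one that forces $n_l$ to be optimal; it was justified there by the fact that at least one $F_{m,l}$ achieves the minimum order $-n_l$, and the principal part of that $F_{m,l}$ contributes unboundedly to the real part $f_m$, which would otherwise have to be dominated by $|z|^{-n_l+1}$.

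With those two bounds in hand, Lemma \ref{lem:pole} applies and identifies $n_l$ as the pole order of $f$ at $p_l$ in the sense of Definition \ref{cmp}. Combining this with the sign convention $\ord_{p_l}f=-n_l$ and with the definition $n_l=-\min\{\ord_{p_l}F_{m,l} : m=0,1,2,3\}$ yields the desired equality. I do not anticipate any genuine obstacle here; the only point requiring care is keeping the signs straight, since $\ord_{p_l}F_{m,l}$ is negative at a pole while the pole order in Lemma \ref{lem:pole} is stated as a positive integer $n$.
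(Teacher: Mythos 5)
Your proposal is correct and matches the paper's own treatment: the paper states that the corollary follows immediately from the proof of Proposition \ref{cmsf}, which is exactly what you do by extracting the two estimates there, invoking Lemma \ref{lem:pole} to identify $n_l$ as the pole order, and translating via the sign convention $\ord_{p_l}f=-n_l=\min\{\ord_{p_l}F_{m,l}\,|\,m=0,1,2,3\}$. No gap.
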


\section{Meromorphic functions}
We factor a meromorphic function on a Riemann surface. 

The map  $\st\colon S^2\setminus\{k\}\to\mathbb{C}$ 
defined by 
\begin{gather*}
\st(x_1i+x_2j+x_3k)=\frac{x_1}{1-x_3}+\frac{x_2}{1-x_3}i\enskip(x_1,x_2,x_3\in\mathbb{R})
\end{gather*}
is the stereographic projection from $k$.
We model $S^2$ on the complex projective line $\mathbb{C}P^1$ so that 
$\st$ is a holomorphic map. 

Let $N\colon M\to S^2$ be a conformal map. 
Then $N\colon M\to S^2\cong \mathbb{C}P^1$ is holomorphic or anti-holomorphic. 
Differentiating the equation $N^2=-1$, we have $dN\,N+N\,dN=0$. By Lemma 2 in \cite{BFLPP02}, we have 
$(dN)_N=0$ or $(dN)_{-N}=0$. 
\begin{lemma}\label{hol}
A map $N\colon M\to S^2\cong\mathbb{C}P^1$ is holomorphic if and only if $N$ satisfies 
$(dN)_{N}=(dN)^{-N}=0$. 
\end{lemma}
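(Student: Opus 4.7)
The plan is to reduce the two stated equations to a single equivalent condition, and then match that condition against the standard characterization of holomorphicity of a map to $\mathbb{C}P^1$.

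First I would differentiate the defining identity $N^2=-1$ of $S^2$ to obtain the pointwise relation $N\,dN+dN\,N=0$, equivalently $N\,dN\,N=dN$ (using $N^2=-1$ again). This relation lets me pass between the two displayed equations: expanding
\[
(dN)_N=\tfrac{1}{2}(dN-N\ast dN),\qquad (dN)^{-N}=\tfrac{1}{2}(dN+\ast dN\,N),
\]
and substituting $N\,dN\,N=dN$ shows that $(dN)_N=0$ and $(dN)^{-N}=0$ are equivalent whenever $N$ takes values in $S^2$. So it will suffice to characterize holomorphicity in terms of one of them, say $(dN)^{-N}=0$.

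Next I would identify the complex structure on $T_NS^2$ coming from the isomorphism $S^2\cong\mathbb{C}P^1$ provided by $\st$ with right multiplication by $N$; that is, for $v\in T_NS^2=\{v\in\Img\mathbb{H}\mid\langle N,v\rangle=0\}$ I claim $J_Nv=v\,N$. Since this is pointwise, I would check it by computing $d\st$ at two convenient test points (say $N=-k$ and $N=i$) and verifying that the pullback of multiplication by $i$ on $\mathbb{C}$ to the corresponding tangent plane agrees with $v\mapsto v\,N$. (Near the missing pole $k$, one repeats the verification using stereographic projection from $-k$, which is holomorphically compatible with $\st$.)

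With $J_Nv=v\,N$ in place, $N\colon M\to\mathbb{C}P^1$ is holomorphic if and only if $dN\circ J^M=J_N\circ dN$, i.e.\ $\ast\,dN=dN\,N$. By the formula for $\omega^{-N}$ recalled above, this is exactly $(dN)^{-N}=0$, and by the first step it is equivalent to $(dN)_N=0$.

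The main obstacle I expect is the sign/orientation bookkeeping in the second step: it is easy to confuse $v\mapsto Nv$ with $v\mapsto vN$ on $T_NS^2$ (they differ only by a sign because $Nv=-vN$ on the tangent space), and the ``wrong'' choice gives precisely the \emph{anti-}holomorphic characterization and flips the conclusion. The differentiation of $N^2=-1$ and the algebraic manipulations in the first step are routine.
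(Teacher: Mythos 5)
Your overall strategy is sound and genuinely different from the paper's. You first observe that $N^2=-1$ gives $N\,dN=-dN\,N$ (and hence also $N\ast dN=-\ast dN\,N$), from which $(dN)^{-N}=N\,(dN)_N\,N$, so the two conditions in the lemma collapse to the single condition $\ast\,dN=dN\,N$; you then aim to recognize $v\mapsto vN$ as the complex structure on $T_NS^2$ induced by $\st$, so that $\ast\,dN=dN\,N$ is literally the Cauchy--Riemann equation $dN\circ J^M=J^{S^2}\circ dN$. This is a cleaner, coordinate-free route than the paper's proof, which writes $N=n_1i+n_2j+n_3k$, unpacks $(dN)_N=(dN)^{-N}=0$ into four real equations, and verifies by direct computation in the chart $l_1+l_2i=\st\circ N$ (with the rotation $N\mapsto -iNi$ used to cover the point where $N=k$) that these equations are equivalent to $(d(l_1+l_2i))_{-i}=0$, and conversely. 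Your sign determination is also correct: right multiplication by $N$, not left, is the induced structure.

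Two steps nevertheless need repair. First, checking $J_Nv=vN$ at the two points $N=-k$ and $N=i$ does not prove it for every $N\in S^2$. To close this you need either the computation at a general point (which is essentially what the paper's proof amounts to), or an argument such as: both $J_N$ and $v\mapsto vN$ are orthogonal complex structures on the $2$-plane $T_NS^2$ (note $|vN|=|v|$ and $(vN)N=-v$), hence agree up to sign at each point; the sign is locally constant and $S^2$ is connected, so a single point-check fixes it globally. (Equivariance under $N\mapsto aNa^{-1}$, $v\mapsto ava^{-1}$ together with holomorphicity of rotations would also do.) Second, your parenthetical claim that stereographic projection from $-k$ is holomorphically compatible with $\st$ is false: with the analogous formula the transition map is $z\mapsto 1/\overline{z}$, so the two charts are \emph{anti}-holomorphically compatible (cf.\ the computation in the proof of Corollary \ref{antihol}); you must take the conjugate chart near $k$, or, as the paper does, precompose with the holomorphic rotation $N\mapsto -iNi$. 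Neither issue is fatal, but as written the argument is incomplete precisely at the identification of the complex structure, which is the heart of your proof.
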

\begin{proof}
For a map $N\colon M\to S^2$, we define real-valued functions $n_1$, $n_2$, and $n_3$ by $N=n_1i+n_2j+n_3k$. 
Put $M_+:=M\setminus\{p\in M\,|\,N(p)=k\}$ and 
$M_-:=M\setminus\{p\in M\,|\,N(p)=-k\}$. 

We assume that $(dN)_{N}=(dN)^{-N}=0$. 
This equation becomes 
\begin{gather*}
n_1\,\ast\,dn_1+n_2\,\ast\,dn_2-n_3\,\ast\,dn_3=0,\\
dn_1-n_2\,\ast\,dn_3+n_3\,\ast\,dn_2=0,\\
dn_2-n_3\,\ast\,dn_1+n_1\,\ast\,dn_3=0,\\
dn_3-n_1\,\ast\,dn_2+n_2\,\ast\,dn_1=0.
\end{gather*}
We define functions $l_1$ and $l_2$ with values in $\mathbb{R}$ on $M_+$ by 
\begin{gather*}
l_1+l_2i:=\st\circ N=\frac{n_1}{1-n_3}+\frac{n_2}{1-n_3}i. 
\end{gather*}
Then, 
\begin{gather*}
dl_1=\frac{dn_1(1-n_3)-n_1\,d(1-n_3)}{(1-n_3)^2}=\frac{dn_1-dn_1\,n_3+n_1\,dn_3}{(1-n_3)^2}
=\frac{dn_1+\ast\,dn_2}{(1-n_3)^2},\\
dl_2=\frac{dn_2-dn_2\,n_3+n_2\,dn_3}{(1-n_3)^2}=\frac{dn_2-\ast\,dn_1}{(1-n_3)^2}. 
\end{gather*}
Hence, 
\begin{gather*}
(d(l_1+l_2i))_{-i}=0.
\end{gather*}
Then, $l_1+l_2i$ is a holomorphic function. 
Hence, $N|_{M_+}$ is a holomorphic map.

The map 
\begin{gather*}
-iNi=n_1i-n_2j-n_3k
\end{gather*}
is a rotation of $N$ centered at the origin. 
We have $M\setminus\{p\in M\,|\,-i(N(p))i=k\}=M_-$. 
In an analogous discussion as above, we show that 
$-iNi|_{M_-}$ is a holomorphic map. 
This is equivalent to having 
$N|_{M_-}$ as a holomorphic map. 
Therefore, $N$ is a holomorphic map on $M$. 

Conversely, we assume that $N$ is holomorphic. Then, $(d(l_1+l_2i))_{-i}=0$. 
Because
\begin{gather*}
N=\frac{2l_1}{l_1^2+l_2^2+1}i+\frac{2l_2}{l_1^2+l_2^2+1}j+\frac{l_1^2+l_2^2-1}{l_1^2+l_2^2+1}k,
\end{gather*}
we have 
\begin{gather*}
dN=\left(\frac{2(-l_1^2+l_2^2+1)}{(l_1^2+l_2^2+1)^2}dl_1-\frac{4l_1l_2}{(l_1^2+l_2^2+1)^2}dl_2\right)i\\
+\left(\frac{2(l_1^2-l_2^2+1)}{(l_1^2+l_2^2+1)^2}dl_1-\frac{4l_1l_2}{(l_1^2+l_2^2+1)^2}dl_2\right)j\\
+\left(\frac{4l_1}{(l_1^2+l_2^2+1)^2}dl_1+\frac{4l_2}{(l_1^2+l_2^2+1)^2}dl_2\right)k,\\
N\,dN=\left(\frac{4l_1l_2}{(l_1^2+l_2^2+1)^2}dl_1+\frac{2(-l_1^2+l_2^2+1)}{(l_1^2+l_2^2+1)^2}dl_2\right)i\\
+\left(\frac{4l_1l_2}{(l_1^2+l_2^2+1)^2}dl_1+\frac{2(l_1^2-l_2^2+1)}{(l_1^2+l_2^2+1)^2}dl_2\right)j\\
+\left(-\frac{4l_2}{(l_1^2+l_2^2+1)^2}dl_1+\frac{4l_1}{(l_1^2+l_2^2+1)^2}dl_2\right)k. 
\end{gather*}
Hence, $(dN)_{N}|_{M_+}=0$. 
A similar discussion for $-iNi$ shows that $(dN)_{N}|_{M_-}=0$. 
Hence, $(dN)_{N}=(dN)^{-N}=0$ over $M$. 
\end{proof}
\begin{corollary}\label{antihol}
A map $N\colon M\to S^2\cong\mathbb{C}P^1$ is anti-holomorphic if and only if $-N$ is holomorphic. 
In other words, a map $N\colon M\to S^2$ is anti-holomorphic if and only if
$(dN)_{-N}=(dN)^N=0$. 
\end{corollary}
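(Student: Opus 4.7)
The plan is to reduce Corollary \ref{antihol} to Lemma \ref{hol} applied to $-N$, after first establishing the geometric fact that the antipodal map $A\colon S^2\to S^2$, $A(x)=-x$, is an anti-holomorphic involution of $\mathbb{C}P^1$.

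For the first equivalence (that $N$ is anti-holomorphic iff $-N$ is holomorphic), I would do a short stereographic computation: on the common domain where both $\st\circ N$ and $\st\circ(-N)$ are defined, one verifies the identity $\st(-N)=-1/\overline{\st(N)}$. This expresses $\st\circ(-N)$ as $\sigma\circ\overline{\st\circ N}$, where $\sigma(w)=-1/w$ is a M\"obius transformation, hence biholomorphic. Consequently $\st\circ N$ is anti-holomorphic iff $\st\circ(-N)$ is holomorphic, which is precisely ``$N$ anti-holomorphic iff $-N$ holomorphic.'' (One also needs to cover the point where $N(p)=k$ by an analogous computation using a rotated chart, exactly as in the proof of Lemma \ref{hol}.)

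For the second equivalence, I would simply apply Lemma \ref{hol} to the map $\widetilde{N}:=-N$. The lemma says $\widetilde{N}$ is holomorphic iff $(d\widetilde{N})_{\widetilde{N}}=(d\widetilde{N})^{-\widetilde{N}}=0$, i.e., $(d(-N))_{-N}=(d(-N))^{N}=0$. Using the definitions
\begin{gather*}
\omega_M=\tfrac{1}{2}(\omega-M\ast\omega),\qquad \omega^M=\tfrac{1}{2}(\omega-\ast\omega\,M),
\end{gather*}
linearity in both $\omega$ and $M$ gives the identities $(d(-N))_{-N}=-(dN)_{-N}$ and $(d(-N))^{N}=-(dN)^{N}$. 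Hence $-N$ is holomorphic iff $(dN)_{-N}=(dN)^{N}=0$, and combining with the first equivalence completes the proof.

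There is no real obstacle here; the only thing to be careful about is to avoid confusing sign conventions when both the argument $\omega=dN$ and the subscript $N$ in $\omega_N$, $\omega^N$ change sign simultaneously, and to treat the two stereographic charts symmetrically so the equivalences hold globally on $M$.
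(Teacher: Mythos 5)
Your proof is correct and follows essentially the same route as the paper: the identical stereographic identity $\st\circ(-N)=-1/\overline{\st\circ N}$ gives the first equivalence, and the second is obtained by applying Lemma \ref{hol} to $-N$. You merely spell out the sign bookkeeping $(d(-N))_{-N}=-(dN)_{-N}$ and $(d(-N))^{N}=-(dN)^{N}$ that the paper leaves implicit.
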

\begin{proof}
We have 
\begin{gather*}
\st\circ (-N)=\frac{-(n_1+n_2i)}{1+n_3}
=\frac{-(n_1^2+n_2^2)}{(1+n_3)(n_1-n_2i)}\\
=\frac{-(1-n_3^2)}{(1+n_3)(n_1-n_2i)}
=\frac{-(1-n_3)}{n_1-n_2i}
=-\frac{1}{\overline{\st\circ N}}.
\end{gather*}
Hence, $N$ is anti-holomorphic if and only if $-N$ is holomorphic; that is 
$(dN)_{-N}=(dN)^N=0$. 
\end{proof}
\begin{lemma}\label{fctmerom}
Let $M$ be a Riemann surface.

$(1)$ A map $N\colon M\to \mathbb{C}P^1\cong S^2$ is a holomorphic map if and only if, for 
any point $p\in M$, 
there exist holomorphic functions $\lambda_0$ and $\lambda_1$ at $p$ such that $\lambda_0$ and $\lambda_1$ have no common zero and that $N=(\lambda_0+j\lambda_1) i(\lambda_0+j\lambda_1)^{-1}$. 

$(2)$ A map $N\colon M\to \mathbb{C}P^1\cong S^2$ is an anti-holomorphic map if and only if, for any point $p\in M$, 
there exist holomorphic functions $\lambda_0$ and $\lambda_1$ at $p$ such that $\lambda_0$ and $\lambda_1$ have no common zero and that $N=-(\lambda_0+j\lambda_1) i(\lambda_0+j\lambda_1)^{-1}$.  \end{lemma}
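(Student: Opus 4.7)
The strategy is to identify the map $q\mapsto qiq^{-1}$ with a M\"obius transformation $\mathbb{C}P^1\to\mathbb{C}P^1$ under the stereographic projection $\st$; once this is done, both directions of (1) follow from standard facts about holomorphic maps into $\mathbb{C}P^1$, and (2) reduces to (1) via Corollary \ref{antihol}. Concretely, set $q:=\lambda_0+j\lambda_1$. Then $\bar q=\bar\lambda_0-j\lambda_1$, $|q|^2=|\lambda_0|^2+|\lambda_1|^2$, and $N:=qiq^{-1}$ automatically lies in $S^2$. Using only $j\mu=\bar\mu j$ for $\mu\in\mathbb{C}$ together with $ji=-k$, $kj=-i$, $ik=-j$, a direct expansion gives
\begin{gather*}
qi\bar q=(|\lambda_0|^2-|\lambda_1|^2)\,i+2\,\Img(\lambda_0\bar\lambda_1)\,j-2\,\real(\lambda_0\bar\lambda_1)\,k.
\end{gather*}
Dividing by $|q|^2$, applying $\st$, and simplifying via $(\lambda_0-\lambda_1)(\bar\lambda_0+\bar\lambda_1)=(|\lambda_0|^2-|\lambda_1|^2)+2\Img(\lambda_0\bar\lambda_1)i$ together with $|\lambda_0+\lambda_1|^2=|\lambda_0|^2+|\lambda_1|^2+2\real(\lambda_0\bar\lambda_1)$ yields the crucial identity
\begin{gather*}
\st\circ N=\frac{\lambda_0-\lambda_1}{\lambda_0+\lambda_1}.
\end{gather*}

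Both directions of (1) then follow. If $\lambda_0,\lambda_1$ are holomorphic near $p$ with no common zero, the right-hand side is a meromorphic function on the neighborhood (equivalently, $[\lambda_0-\lambda_1:\lambda_0+\lambda_1]$ is a well-defined holomorphic $\mathbb{C}P^1$-valued map, since simultaneous vanishing would force $\lambda_0=\lambda_1=0$), so $N$ is holomorphic by Lemma \ref{hol}. Conversely, given holomorphic $N\colon M\to\mathbb{C}P^1$, lift it on a neighborhood of $p$ to a pair $(\mu_0,\mu_1)$ of holomorphic functions with no common zero such that $\st\circ N=\mu_0/\mu_1$ (such a lift exists via a local section of the tautological line bundle over $\mathbb{C}P^1$), and set $\lambda_0:=(\mu_0+\mu_1)/2$ and $\lambda_1:=(\mu_1-\mu_0)/2$. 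Then $\lambda_0,\lambda_1$ are holomorphic with no common zero (a joint zero would force $\mu_0=\mu_1=0$), and $(\lambda_0-\lambda_1)/(\lambda_0+\lambda_1)=\mu_0/\mu_1=\st\circ N$, so $N=(\lambda_0+j\lambda_1)\,i\,(\lambda_0+j\lambda_1)^{-1}$ by the identity above.

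For (2), Corollary \ref{antihol} gives that $N$ is anti-holomorphic if and only if $-N$ is holomorphic; applying (1) to $-N$ produces local holomorphic $\lambda_0,\lambda_1$ with no common zero satisfying $-N=(\lambda_0+j\lambda_1)\,i\,(\lambda_0+j\lambda_1)^{-1}$, which is the asserted formula up to sign. The main obstacle is the opening quaternionic computation: every complex coefficient has to be moved past $j$ and $k$ with the correct anticommutation rule, and the real and imaginary parts of $\lambda_0\bar\lambda_1$ must be matched to the $k$- and $j$-components of $qi\bar q$ without sign errors. Once the identity $\st\circ N=(\lambda_0-\lambda_1)/(\lambda_0+\lambda_1)$ is in hand, everything else reduces to the standard local lifting of a holomorphic map into $\mathbb{C}P^1$.
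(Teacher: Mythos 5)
Your proof is correct, but it takes a genuinely different route from the paper's. You convert the quaternionic conjugation into an explicit chart computation: the identity $\st\circ N=(\lambda_0-\lambda_1)/(\lambda_0+\lambda_1)$ (whose derivation via $qi\bar q=(|\lambda_0|^2-|\lambda_1|^2)i+2\Img(\lambda_0\bar\lambda_1)j-2\real(\lambda_0\bar\lambda_1)k$ I have checked and is right) identifies $N$ with the holomorphic $\mathbb{C}P^1$-valued map $[\lambda_0-\lambda_1:\lambda_0+\lambda_1]$, so the forward direction is immediate from the definition of holomorphicity in the chart, and the converse reduces to the standard local lift of a holomorphic map into $\mathbb{C}P^1$ followed by the linear change of lift $\lambda_0=(\mu_0+\mu_1)/2$, $\lambda_1=(\mu_1-\mu_0)/2$. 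The paper instead stays inside the quaternionic formalism: for the forward direction it differentiates $N=\lambda i\lambda^{-1}$ and verifies $(dN)_N=0$ directly (then invokes Lemma \ref{hol}); for the converse it writes $N=\xi i\xi^{-1}$ with a smooth unit-quaternion gauge $\xi=\xi_0+j\xi_1$, shows $\omega:=\xi^{-1}(d\xi)^{-i}$ is a $(0,1)$-form with $\bar\partial\xi_n=\xi_n\omega$, hence $\omega=\bar\partial\log\xi_0=\bar\partial\log\xi_1$, and extracts the holomorphic pair from the resulting meromorphic ratio. Your version is more elementary and makes the M\"obius structure of the correspondence explicit, which is useful for computations; the paper's version avoids charts and is closer in style to the type-decomposition machinery ($(d\lambda)^{-i}=0$, $(dN)_N=0$) used throughout the rest of the paper. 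Two very minor points: your citation of Lemma \ref{hol} in the forward direction is not what is actually needed there --- you are using the definition of a holomorphic map into $\mathbb{C}P^1$ via the chart $\st$, not the characterization $(dN)_N=(dN)^{-N}=0$; and the passage from equality of $\st\circ N$ and $\st\circ N'$ off the fiber over $k$ to equality of $N$ and $N'$ everywhere deserves the one-line continuity remark, though your bracket formulation essentially supplies it.
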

\begin{proof}
$(2)$ follows from $(1)$ and Corollary \ref{antihol}. 
We show $(1)$. 

We assume that $\lambda_0$ and $\lambda_1$ are holomorphic functions at $p$ without common zero. 
Set $\lambda:=\lambda_0+j\lambda_1$. 
Then $(d\lambda)^{-i}=0$. 
We have  
\begin{gather*}
dN=d\lambda\, i \lambda^{-1}-\lambda i\lambda^{-1}\,d\lambda\,\lambda^{-1},\\
N\ast\,dN=\lambda i\lambda^{-1} \ast\,d\lambda\, i \lambda^{-1}+\ast\,d\lambda\,\lambda^{-1}
=-\lambda i\lambda^{-1} d\lambda\, \lambda^{-1}+d\lambda\,i\lambda^{-1}. 
\end{gather*}
Hence, $(dN)_N=0$. Then, $N$ is holomorphic at $p$ by Lemma \ref{hol}. 

Conversely, we assume that $N$ is holomorphic at $p$. Then, $(dN)_N=0$. 
For any $a\in S^3$, the quaternion $aia^{-1}$ is a rotation of $i$ centered at the origin in $\Img\mathbb{H}$. 
Hence, there exists a map $\xi$ with $|\xi|=1$ such that $N=\xi i \xi^{-1}$. 

The equation $(dN)_N=0$ becomes
\begin{gather*}
d\xi\, i \xi^{-1}-\xi i\xi^{-1}\,d\xi\,\xi^{-1}
=\xi i\xi^{-1} \ast d\xi\, i \xi^{-1}+\ast\,d\xi\,\xi^{-1}. 
\end{gather*}
Simplifying this equation, we have 
\begin{gather*}
i\xi^{-1}(d\xi)^{-i}
=\xi^{-1}(d\xi)^{-i}\,i. 
\end{gather*} 
Hence $\omega:=\xi^{-1}(d\xi)^{-i}$ is a complex $(0,1)$-form. 
Let $\xi_0$ and $\xi_1$ be complex functions such that $\xi=\xi_0+j\xi_1$. 
Because $|\xi|=1$, the functions $\xi_0$ and $\xi_1$ have no common zero. 
Then, 
\begin{gather*}
\bar\partial\xi_0+j\,\bar\partial\xi_1=\xi_0\omega+j\xi_1\omega. 
\end{gather*}
Hence, 
\begin{gather*}
\omega=\bar\partial\log\xi_0=\bar\partial\log\xi_1. 
\end{gather*}
Then, there exist holomorphic functions $\lambda_0$ and $\lambda_1$ at $p$ without common zeros at $p$ such that $\lambda_0\xi_0=\lambda_1\xi_1$. 
Then 
\begin{gather*}
N=\xi i\xi^{-1}=(\xi_0+j\xi_1)i(\xi_0+j\xi_1)^{-1}\\
=(\lambda_0\xi_0+j\lambda_0\xi_1)\lambda_0^{-1}i\lambda_0(\lambda_0\xi_0+j\lambda_0\xi_1)^{-1}\\
=(\lambda_1\xi_1+j\lambda_0\xi_1)i(\lambda_1\xi_1+j\lambda_0\xi_1)^{-1}\\
=(\lambda_1+j\lambda_0)i(\lambda_1+j\lambda_0)^{-1}. 
\end{gather*}
Thus, the theorem holds. 
\end{proof}
If $N=(\lambda_0+j\lambda_1) i(\lambda_0+j\lambda_1)^{-1}$ is a holomorphic map with 
holomorphic functions $\lambda_0$ and $\lambda_1$, then 
$N=\Lambda i\overline{\Lambda}$ with $\Lambda:=(\lambda_0+j\lambda_1)/|\lambda|$.  

If $M$ is a closed Riemann surface and $\lambda_0$ and $\lambda_1$ are meromorphic on $M$, 
then $N\colon M\to S^2\cong\mathbb{C}P^1$ is a holomorphic map. 
From this factorization, we have a relation between the degree of $N$ and the degree of $\lambda$ 
when $M$ is closed. 
\begin{theorem}\label{degree}
Let $M$ be a closed Riemann surface, $\lambda_0$ and $\lambda_1$ are meromorphic functions on $M$, 
$\lambda:=\lambda_0+j\lambda_1$, and $N:=\lambda i \lambda^{-1}$. 
The degree of a holomorphic map $N$ is $m$ if and only if the degree of $\lambda_0$ and $\lambda_1$ are $m$. 
\end{theorem}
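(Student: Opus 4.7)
My plan is to reduce the degree question to a classical computation for meromorphic functions by writing $N$ explicitly through the stereographic projection.

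The central step is to establish the identity
\[
\st \circ N = \frac{\lambda_0 - \lambda_1}{\lambda_0 + \lambda_1}.
\]
Using $\lambda^{-1} = \bar\lambda/|\lambda|^2$ with $\lambda = \lambda_0 + j\lambda_1$, together with the relations $j\alpha = \bar\alpha j$ and $ij = k$ for $\alpha \in \mathbb{C}$, a direct expansion yields $\lambda i \bar\lambda = i(|\lambda_0|^2 - |\lambda_1|^2) - 2\lambda_0 \bar\lambda_1 k$. The components of $N$ are therefore $n_1 = (|\lambda_0|^2 - |\lambda_1|^2)/|\lambda|^2$, $n_2 = 2\,\Img(\lambda_0 \bar\lambda_1)/|\lambda|^2$, and $n_3 = -2\,\real(\lambda_0 \bar\lambda_1)/|\lambda|^2$. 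Substituting into $\st(N) = (n_1 + in_2)/(1-n_3)$, factoring the numerator as $(\lambda_0 - \lambda_1)(\bar\lambda_0 + \bar\lambda_1)/|\lambda|^2$ and recognizing that $1 - n_3 = |\lambda_0 + \lambda_1|^2/|\lambda|^2 = (\lambda_0 + \lambda_1)(\bar\lambda_0 + \bar\lambda_1)/|\lambda|^2$, the anti-holomorphic factor $\bar\lambda_0 + \bar\lambda_1$ cancels out and the identity follows.

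Once this formula is in hand, the remainder is classical. Since $\st$ is a biholomorphism of $S^2 \setminus \{k\}$ onto $\mathbb{C}$ and $w \mapsto (w-1)/(w+1)$ is a Möbius automorphism of $\mathbb{C}P^1$, one obtains $\deg N = \deg(\lambda_0 / \lambda_1)$ as holomorphic maps $M \to \mathbb{C}P^1$. To finish, I would count preimages of regular values of this meromorphic function: the preimage of $0$ is supported on the zero divisor of $\lambda_0$ together with the polar divisor of $\lambda_1$, and the preimage of $\infty$ is supported on the zero divisor of $\lambda_1$ together with the polar divisor of $\lambda_0$, both counts equalling $\deg N$. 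The main obstacle will be the divisor-theoretic bookkeeping that matches these preimage counts with $\deg \lambda_0$ and $\deg \lambda_1$ separately while handling any common zeros or poles of $\lambda_0$ and $\lambda_1$, and it is this step that yields the equivalence $\deg N = m \iff \deg \lambda_0 = \deg \lambda_1 = m$.
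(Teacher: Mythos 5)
Your key identity $\st\circ N=(\lambda_0-\lambda_1)/(\lambda_0+\lambda_1)$ is correct (the component computation checks out), and it takes a genuinely different route from the paper's. The paper never passes through the stereographic projection: it counts preimages of $i$ and of $-i$ under $N$ directly, using the quaternionic commutation relations to show that $N=i$ is equivalent to $i\lambda=\lambda i$, hence to $\lambda_1=0$, and that $N=-i$ is equivalent to $\lambda_0=0$; the degree of $N$ is then read off as the number of zeros of $\lambda_1$ (resp.\ $\lambda_0$) counted with multiplicity. Your computation buys a sharper intermediate statement, namely $\deg N=\deg(\lambda_0/\lambda_1)$, exhibiting $N$ as the meromorphic function $\lambda_0/\lambda_1$ up to a M\"obius automorphism of $\mathbb{C}P^1$; the paper's argument is shorter and stays entirely inside the quaternionic formalism.

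However, the step you defer as ``divisor-theoretic bookkeeping'' is not bookkeeping: the equivalence $\deg(\lambda_0/\lambda_1)=m\iff\deg\lambda_0=\deg\lambda_1=m$ is false for general meromorphic $\lambda_0,\lambda_1$. On $\mathbb{C}P^1$ take $\lambda_0=z$ and $\lambda_1=(z-1)/(z-2)$: both have degree $1$, but $\lambda_0/\lambda_1=z(z-2)/(z-1)$ has degree $2$. The obstruction is not only common zeros but the interaction of the zero divisor of one function with the polar divisor of the other: $\deg(\lambda_0/\lambda_1)=\sum_p\max(0,\ord_p\lambda_0-\ord_p\lambda_1)$, and this collapses to $\deg\lambda_0=\deg\lambda_1$ only under a hypothesis such as ``$\lambda_0$ and $\lambda_1$ have the same polar divisor and no common zeros,'' which is the situation produced by Lemma \ref{fctmerom} but is not stated in the theorem. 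You need to make that hypothesis explicit before the final step closes. (To be fair, the paper's proof silently needs the same hypothesis: at a pole of $\lambda_0$ that is not a pole of $\lambda_1$ one has $N=i$ even though $\lambda_1\neq0$ there, so the asserted equivalence ``$N=i\iff\lambda_1=0$'' also fails without it; in the example above the preimage of $i$ is $\{1,\infty\}$, not just the zero of $\lambda_1$.)
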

\begin{proof}
We assume that the degree of $N$ is $m$. 
The equation $N=i$ has $m$ solutions counting multiplicities. 
This equation is equivalent to the equation $i\lambda=\lambda i$. 
Rewriting this equation, we have $\lambda_0 i=\lambda_0 i$ and $-\lambda_1 i=\lambda_1i$. 
The former equation is trivial; the latter is equivalent to $\lambda_1=0$. 
Hence the equation $\lambda_1=0$ has $m$ solutions counting multiplicities.
Then, $\lambda_1$ is a meromorphic function of degree $m$. 
Next, we consider the equation $N=-i$. This equation is equivalent to 
$\lambda_0=0$. 
Hence $\lambda_0$ is a meromorphic function of degree $m$. 
The converse is trivial. 
\end{proof}
\section{Super-conformal maps}
We factor a super-conformal map.

We recall the definition and basic properties of a super-conformal map (see \cite{BFLPP02}). 
A conformal map $f\colon M\to\mathbb{H}$ is called a super-conformal map if its curvature ellipse is a circle. 
A conformal map $f$ is super-conformal if and only if 
its left normal or its right normal is anti-holomorphic. 
Let $N\colon M\to S^2$ be the left normal of $f$ and $R\colon M\to S^2$ the right normal of $f$. 
Then $f$ is super-conformal if and only if $(dN)_{-N}=0$ or 
$(dR)_{-R}=0$ by Corollary \ref{antihol}. 
Then the following is trivial by Lemma \ref{hol} and Corollary \ref{antihol}:
\begin{lemma}\label{mersc}
A holomorphic map and an anti-holomorphic map from $M$ to $\mathbb{C}P^1\cong S^2$ are super-conformal.
\end{lemma}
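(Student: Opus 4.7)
The plan is to verify the super-conformality criterion stated just before the lemma: a conformal map $f\colon M\to\mathbb{H}$ is super-conformal precisely when either its left normal or its right normal is anti-holomorphic. So given a holomorphic (or anti-holomorphic) map $\phi\colon M\to S^2\subset\mathbb{H}$, it suffices to recognize $\phi$ itself as a conformal map and identify one of its normals with an anti-holomorphic map from $M$ to $S^2$. I will split into the two cases and in each case pull the required equation directly from Lemma \ref{hol} and Corollary \ref{antihol}.

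First, suppose $\phi$ is anti-holomorphic. Then Corollary \ref{antihol} gives $(d\phi)_{-\phi}=0$, which is exactly the conformality condition $(df)_{-N}=0$ from the definition of a conformal map with left normal $N$, applied with $f=\phi$ and $N=\phi$. Hence $\phi$ is a conformal map whose left normal is $\phi$ itself, and by assumption this left normal is anti-holomorphic. Therefore $\phi$ is super-conformal by the criterion recalled at the start of Section~4.

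Second, suppose $\phi$ is holomorphic. By Lemma \ref{hol}, $(d\phi)^{-\phi}=0$, which matches the defining condition $(df)^N=0$ for a conformal map with right normal $N$, applied with $f=\phi$ and $N=-\phi$. So $\phi$ is a conformal map with right normal $-\phi$. Now Corollary \ref{antihol} says that $-\phi$ is anti-holomorphic iff $\phi$ is holomorphic, so the right normal $-\phi$ is anti-holomorphic, and $\phi$ is again super-conformal.

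There is essentially no obstacle; the only small care needed is to distinguish the two normals, since a holomorphic $\phi$ is a conformal map via its \emph{right} normal while an anti-holomorphic $\phi$ is conformal via its \emph{left} normal. This is the reason the author flags the lemma as trivial modulo Lemma \ref{hol} and Corollary \ref{antihol}, and my write-up will just record the two lines of each case explicitly, tacitly assuming $\phi$ non-constant so that the word ``conformal map'' applies in the sense of \cite{PP98}.
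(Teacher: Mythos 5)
Your proposal is correct and follows exactly the route the paper intends: the paper dismisses the lemma as ``trivial by Lemma \ref{hol} and Corollary \ref{antihol},'' and your two cases (an anti-holomorphic $\phi$ has $(d\phi)_{-\phi}=0$, so it is its own anti-holomorphic left normal; a holomorphic $\phi$ has $(d\phi)^{-\phi}=0$, so its right normal $-\phi$ is anti-holomorphic) are precisely the details being suppressed. Your remark about tacitly assuming $\phi$ non-constant is also the right caveat, since the paper's definition of a conformal map requires it.
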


It is known that a super-conformal map is a stereographic projection composed with  
the twistor projection of a holomorphic map from a Riemann surface to $\mathbb{C}P^3$ (\cite{BFLPP02}, Theorem 5). 
Hence, for holomorphic functions $\lambda_0$, $\lambda_1$, $\lambda_2$ and $\lambda_3$, 
a map 
\begin{gather}
f=(\lambda_0+\lambda_1j)^{-1}(\lambda_2+\lambda_3j)\label{sctw}
\end{gather}
is a super-conformal map with anti-holomorphic left normal. 
Indeed, 
\begin{gather*}
df=-(\lambda_0+\lambda_1j)^{-1}(d\lambda_0+d\lambda_1j)(\lambda_0+\lambda_1j)^{-1}(\lambda_2+\lambda_3j)\\
+(\lambda_0+\lambda_1j)^{-1}(d\lambda_2+d\lambda_3j). 
\end{gather*}
Then 
\begin{gather*}
\ast\,df=(\lambda_0+\lambda_1j)^{-1}i(\lambda_0+\lambda_1j)\,df. 
\end{gather*}
Hence $f$ is conformal with left normal $(\lambda_0+\lambda_1j)^{-1}i(\lambda_0+\lambda_1j)$. 
We have 
\begin{gather*}
(\lambda_0+\lambda_1j)^{-1}i(\lambda_0+\lambda_1j)
=(\overline{\lambda_0}-\lambda_1j)i(\overline{\lambda_0}-\lambda_1j)^{-1}\\
=(-\overline{\lambda_0}j-\lambda_1)ji(-j)(-\overline{\lambda_0}j-\lambda_1)^{-1}\\
=-(\lambda_1+j\lambda_0)i(\lambda_1+j\lambda_0)^{-1}. 
\end{gather*}
By Lemma \ref{fctmerom}, the map $-(\lambda_1+j\lambda_0)i(\lambda_1+j\lambda_0)^{-1}$ is 
an anti-holomorphic map. 
Hence $f$ is a super-conformal map with anti-holomorphic left normal. 
We can consider \eqref{sctw} as a factorization of a super-conformal map. 
Conversely, 
if an anti-holomorphic map $-(\lambda_1+j\lambda_0)i(\lambda_1+j\lambda_0)^{-1}$ is given, then 
a map $f=(\lambda_0+\lambda_1j)^{-1}(\lambda_2+\lambda_3j)$ with holomorphic functions 
$\lambda_2$ and $\lambda_3$ is a super-conformal map.

We have another factorization of a super-conformal map. 
Let $V=M\times \mathbb{H}$ with the projection 
$\pi\colon V\to M$ be the trivial right quaternionic 
line bundle over $M$ 
and $N\colon M\to N(M)\subsetneq S^2\cong\mathbb{C}P^1$ an 
anti-holomorphic map. 
Let $E\subset V$ be the eigenbundle of 
the left regular representation of $N$ on $\mathbb{H}$ with eigenvalue $+i$.  
Lemma \ref{psi} ensures 
that there exists a global trivializing section $\psi$ of $E$. 
\begin{lemma}\label{gscs}
Let $V=M\times \mathbb{H}$ with the projection 
$\pi\colon V\to M$ be the trivial right quaternionic 
line bundle over $M$, 
$N\colon M\to N(M)\subsetneq S^2\cong\mathbb{C}P^1$ be an 
anti-holomorphic map and 
$E\subset V$ be the eigenbundle  of the left regular representation of $N$ on $\mathbb{H}$ with eigenvalue $+i$.
Then there exists a global trivializing section $\psi\colon M\to\mathbb{H}$ of $E$ which is a super-conformal map with anti-holomorphic left normal $N$. 
\end{lemma}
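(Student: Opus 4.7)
The plan is to take the trivializing section $\psi = Na + ai$ already produced in Lemma \ref{psi} and verify directly that it is super-conformal with left normal $N$; no new construction is needed.

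First I would invoke Lemma \ref{psi} to pick $a \in S^3$ with $-aia^{-1} \neq N(p)$ for every $p \in M$, so that $\psi := Na + ai$ is nowhere zero and satisfies $N\psi = \psi i$. This already gives a global trivializing section of $E$.

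For the super-conformal condition, I would compute $d\psi$ directly. Since $a$ and $ai$ are constant quaternions, $d\psi = (dN)\,a$ and $\ast d\psi = (\ast dN)\,a$. Because right multiplication by the constant $a$ commutes with the left-acting decomposition operator $(\cdot)_{-N}$, this yields
\[
(d\psi)_{-N} = \tfrac{1}{2}(d\psi + N\ast d\psi) = \tfrac{1}{2}(dN + N\ast dN)\,a = (dN)_{-N}\,a.
\]
By Corollary \ref{antihol}, anti-holomorphicity of $N$ forces $(dN)_{-N} = 0$, hence $(d\psi)_{-N} = 0$. Thus $\psi$ is a conformal map with left normal $N$, non-constant because $d\psi$ and $dN$ vanish together and $N$ is non-constant by the standing assumption; since $N$ is anti-holomorphic, $\psi$ is super-conformal by the definition recalled at the start of the section.

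No serious obstacle is anticipated: the whole argument collapses to the observation that the $(\cdot)_{-N}$ operator commutes with right multiplication by a constant quaternion, reducing the claim to a single application of Corollary \ref{antihol} to $N$ itself.
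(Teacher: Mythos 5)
Your proposal is correct and follows the paper's own argument: the paper likewise takes $\psi_0=Na+ai$ from Lemma \ref{psi} and concludes from $d\psi_0=dN\,a$ that $\psi_0$ is super-conformal with left normal $N$. You merely make explicit the step $(d\psi)_{-N}=(dN)_{-N}\,a=0$ via Corollary \ref{antihol} (and the non-constancy check), which the paper leaves implicit.
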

\begin{proof}
From the proof of Lemma \ref{psi}, there exists $a\in S^3$ such that 
$\psi_0:=Na+ai$ is a global trivializing section of $E$. 
Because $d\psi_0=dN\,a$, the map $\psi_0$ is a super-conformal map 
with left normal $N$. 
\end{proof}
\begin{definition}
We call a global trivializing section $\psi$ of $E$ which is a super-conformal map with anti-holomorphic left normal $N$ a global super-conformal trivializing section of $E$. 
\end{definition}
\begin{theorem}[Factorization theorem for super-conformal maps]\label{fctsc}
Let $V=M\times \mathbb{H}$ with the projection 
$\pi\colon V\to M$ be the trivial right quaternionic 
line bundle over $M$, 
$N\colon M\to N(M)\subsetneq S^2\cong\mathbb{C}P^1$ be an 
anti-holomorphic map, 
$E\subset V$ be the eigenbundle of the left regular representation of $N$ on $\mathbb{H}$ with eigenvalue $+i$ and 
$\psi$ be a global super-conformal trivializing section of $E$. 
A map $f\colon M\to\mathbb{H}$ is a super-conformal map with anti-holomorphic left normal $N$ 
if and only if 
$f=\psi(\lambda_0+\lambda_1j)$ with holomorphic functions $\lambda_0$ and $\lambda_1$ on $M$. 
\end{theorem}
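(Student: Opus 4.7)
The plan is to handle both directions by a single direct computation of $\ast df-N\,df$, leveraging two defining properties of $\psi$: since $\psi$ is a trivializing section of the $+i$-eigenbundle $E$, one has $N\psi=\psi i$ pointwise; since $\psi$ is super-conformal with left normal $N$, one has $\ast d\psi=N\,d\psi$. A third standard ingredient is that a complex-valued function $h$ is holomorphic iff $\ast dh=i\,dh$, i.e.\ $(dh)_{-i}=0$.

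For the ``if'' direction, I would set $\mu:=\lambda_0+\lambda_1 j$, apply Leibniz to $f=\psi\mu$, and compute
\[
\ast df=\ast d\psi\,(\lambda_0+\lambda_1 j)+\psi(\ast d\lambda_0+\ast d\lambda_1\,j)=N\,d\psi\,(\lambda_0+\lambda_1 j)+\psi(i\,d\lambda_0+i\,d\lambda_1\,j),
\]
using the holomorphy of $\lambda_0,\lambda_1$. Since complex scalars commute with $i$, the last term equals $\psi i(d\lambda_0+d\lambda_1 j)=N\psi(d\lambda_0+d\lambda_1 j)$, and summing yields $\ast df=N\,df$, i.e.\ $(df)_{-N}=0$. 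Because $N$ is anti-holomorphic by hypothesis, $f$ is then a super-conformal map with anti-holomorphic left normal $N$.

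For the ``only if'' direction, since $\psi$ is nowhere-vanishing and $V$ is the trivial right $\mathbb{H}$-line bundle, every smooth map $f\colon M\to\mathbb{H}$ admits a unique factorization $f=\psi\mu$ with $\mu\colon M\to\mathbb{H}$, and the decomposition $\mu=\lambda_0+\lambda_1 j$ uses $\mathbb{H}=\mathbb{C}\oplus\mathbb{C} j$. Running the previous computation in reverse, the condition $(df)_{-N}=0$ becomes
\[
\psi\bigl[(\ast d\lambda_0-i\,d\lambda_0)+(\ast d\lambda_1-i\,d\lambda_1)j\bigr]=0.
\]
Since $\psi$ is pointwise invertible in $\mathbb{H}$, the bracket vanishes, and splitting it into its $\mathbb{C}$-part and $\mathbb{C} j$-part gives $\ast d\lambda_k=i\,d\lambda_k$ for $k=0,1$, so $\lambda_0$ and $\lambda_1$ are both holomorphic.

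The argument is essentially bookkeeping with the non-commutativity of $\mathbb{H}$; the one place that demands care is the ordering of multiplications (in particular, the asymmetric placement of $\psi$ on the left, together with $N\psi=\psi i$, forces the specific form $\lambda_0+\lambda_1 j$ rather than $\lambda_0+j\lambda_1$). The content of the theorem is precisely that the Leibniz rule, combined with the two identities $N\psi=\psi i$ and $\ast d\psi=N\,d\psi$, converts the conformality PDE for $f$ into the holomorphy PDEs for the components of $\mu$.
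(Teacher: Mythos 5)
Your proof is correct and follows essentially the same route as the paper's: both use the Leibniz rule together with the identities $N\psi=\psi i$ and $(d\psi)_{-N}=0$ (equivalently $\ast d\psi=N\,d\psi$) to reduce $(df)_{-N}=0$ to $(d\lambda)_{-i}=0$, and then invoke the invertibility of $\psi$ for both directions at once. The paper phrases the computation as the single identity $(d(\psi\lambda))_{-N}=\psi(d\lambda)_{-i}$, which is exactly your calculation written in the $\omega_{-N}$ notation.
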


\begin{proof}
Because $\psi$ is nowhere-vanishing, any map $f\colon M\to\mathbb{H}$ is 
factored by the product $\psi(\lambda_0+\lambda_1j)$ with complex functions $\lambda_0$ and $\lambda_1$ on $M$. 
Let $\lambda:=\lambda_0+\lambda_1j$. The functions $\lambda_0$ and $\lambda_1$ are holomorphic if and only if $(d\lambda)_{-i}=0$. 
We have 
\begin{gather*}
2(d(\psi\lambda))_{-N}
=d\psi\,\lambda+\psi\,d\lambda+N\ast(d\psi\,\lambda+\psi\,d\lambda)\\
=(d\psi+N\ast\,d\psi)\lambda+\psi\,d\lambda+N\psi\ast d\lambda
=\psi\,d\lambda+\psi i\ast d\lambda
=2\psi(d\lambda)_{-i}. 
\end{gather*}
Hence, $f=\psi\lambda$ with 
$(d\lambda)_{-i}=0$ if and only if 
$f$ is super-conformal with anti-holomorphic left normal $N$. 
\end{proof}
By the above theorem, 
the set of super-conformal maps from $M$ to $\mathbb{H}$ with anti-\hspace{0pt}holomorphic left normal $N\colon M\to N(M)\subsetneq S^2\cong\mathbb{C}P^1$ is parametrized by two holomorphic functions.
Hence, a super-conformal map adopts properties of a holomorphic function. 
 
The following is an analog of the Liouville theorem  (see \cite{GK06}). 
\begin{theorem}[Liouville's theorem for super-conformal maps]\label{Liouville}
Let $f\colon \mathbb{C}\to\mathbb{H}$ be a super-conformal map 
with anti-holomorphic left normal $N\colon \mathbb{C}\to N(\mathbb{C})\subsetneq S^2\cong\mathbb{C}P^1$ and 
$\psi$ be a global super-conformal trivializing section of $E$. 
If $|\psi|^{-1}$ 
and $|f|$ are bounded above, then 
$f=\psi C$ with a constant $C\in\mathbb{H}$. 
\end{theorem}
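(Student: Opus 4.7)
The plan is to reduce the statement to the classical Liouville theorem by applying the factorization theorem \ref{fctsc}.

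First I would invoke Theorem \ref{fctsc}: since $f$ is a super-conformal map from $\mathbb{C}$ to $\mathbb{H}$ with anti-holomorphic left normal $N$, and $\psi$ is a global super-conformal trivializing section of $E$, we can write
\begin{gather*}
f=\psi(\lambda_0+\lambda_1 j)
\end{gather*}
for some holomorphic functions $\lambda_0,\lambda_1\colon \mathbb{C}\to\mathbb{C}$. The goal becomes showing that $\lambda_0$ and $\lambda_1$ are constants, so that $C:=\lambda_0+\lambda_1 j\in\mathbb{H}$ is the desired constant.

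Next I would estimate $|\lambda_0|$ and $|\lambda_1|$ by exploiting the identity
\begin{gather*}
|f|=|\psi|\,|\lambda_0+\lambda_1 j|=|\psi|\sqrt{|\lambda_0|^2+|\lambda_1|^2}.
\end{gather*}
Rearranging gives $\sqrt{|\lambda_0|^2+|\lambda_1|^2}=|f|\,|\psi|^{-1}$, and the two boundedness hypotheses together yield a uniform constant $K>0$ with $\sqrt{|\lambda_0|^2+|\lambda_1|^2}\le K$ on all of $\mathbb{C}$. In particular $|\lambda_0|\le K$ and $|\lambda_1|\le K$ everywhere.

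Finally, since $\lambda_0$ and $\lambda_1$ are entire and bounded, the classical Liouville theorem for holomorphic functions implies each is a constant in $\mathbb{C}$. Setting $C:=\lambda_0+\lambda_1 j\in\mathbb{H}$ gives $f=\psi C$, as desired. There is no real obstacle here: the content of the theorem is already carried by the factorization, after which the scalar Liouville theorem applied componentwise finishes the job. The only mild subtlety is justifying the pointwise identity $|\lambda_0+\lambda_1 j|=\sqrt{|\lambda_0|^2+|\lambda_1|^2}$, which is immediate from $\overline{\lambda_0+\lambda_1 j}=\overline{\lambda_0}-\lambda_1 j$ and a direct computation in $\mathbb{H}$.
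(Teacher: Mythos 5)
Your proposal is correct and follows essentially the same route as the paper: apply Theorem \ref{fctsc} to write $f=\psi(\lambda_0+\lambda_1 j)$, use $(|\lambda_0|^2+|\lambda_1|^2)^{1/2}=|f|\,|\psi|^{-1}$ together with the two boundedness hypotheses to conclude $\lambda_0,\lambda_1$ are bounded entire functions, and finish with the classical Liouville theorem. No gaps.
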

\begin{proof}
Because $|\psi|^{-1}$ is bounded above, 
there exists a constant $c>0$ such that 
$|\psi|^{-1}\leq c$. 
By the factorization theorem for super-conformal maps, 
$f=\psi(\lambda_0+\lambda_1j)$ 
with holomorphic functions $\lambda_0$ and $\lambda_1$ on $\mathbb{C}$. 
Then 
\begin{gather*}
|\lambda_0+\lambda_1j|=\left(|\lambda_0|^2+|\lambda_1|^2\right)^{1/2}=\frac{|f|}{|\psi|}\leq c|f|. 
\end{gather*}
Because $|f|$ is bounded above, holomorphic functions $\lambda_0$ and $\lambda_1$ are bounded entire functions. 
By the Liouville theorem, $\lambda_0$ and $\lambda_1$ are constant. 
\end{proof}

By Liouville's theorem for super-conformal maps, we see that if $|f|$ and $|\psi|^{-1}$ are bounded above, then the case where the domain of $f$ is proper subset of $\mathbb{C}$ is interesting. 

Let $z$ be the standard holomorphic coordinate of $\mathbb{C}$, $(x,y)$ the real coordinate such that $z=x+yi$ and $B^2:=\{z\in\mathbb{C}\,|\,|z|<1\}$. 
The following is an analog of the Schwarz lemma (see \cite{GK06}). 
\begin{theorem}[The Schwarz lemma for super-conformal maps]\label{Schwarz}
Let $f\colon B^2\to\mathbb{H}$ be a super-conformal map 
with anti-holomorphic left normal $N\colon B^2\to N(B^2)\subsetneq S^2\cong\mathbb{C}P^1$ and 
$\psi$ be a global super-conformal trivializing section of $E$. 
We assume that $f(0)=0$ and $|f|$ is bounded above. 
Moreover, we assume that $|\psi|\leq c$ and $|\psi|^{-1}\leq \tilde{c}$. 
Let $f=\psi(\lambda_0+\lambda_1j)$ with holomorphic functions $\lambda_0$ and $\lambda_1$ on $B^2$. 
Then, there exist constants $C_0$, $C_1>0$ such that 
\begin{gather*}
|f(z)|\leq c(C_0^2+C_1^2)^{1/2}|z|. 
\end{gather*}
The equality holds if and only if the following two conditions hold:
\begin{enumerate}
\item 
$|\psi|=c$. 
\item 
There exists $z_0\in B^2\setminus\{0\}$ such that $|\lambda_n(z_0)|=C_n|z_0|$ $(n=0,1)$. 
\end{enumerate}
Moreover, we have 
\begin{gather*}
\left|\frac{\partial f}{\partial x}(0)-N(0) \frac{\partial f}{\partial y}(0)\right|\leq c(C_0^2+C_1^2)^{1/2}.
\end{gather*}
The equality holds if and only if the following two conditions hold:
\begin{enumerate}
\item 
$|\psi(0)|=c$. 
\item 
There exists $z_0\in B^2\setminus\{0\}$ such that $|\lambda_n(z_0)|=C_n|z_0|$ $(n=0,1)$. 
\end{enumerate}
\end{theorem}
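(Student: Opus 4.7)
The plan is to reduce everything to the classical (complex) Schwarz lemma via the factorization theorem for super-conformal maps. By Theorem \ref{fctsc}, the hypotheses give $f=\psi(\lambda_0+\lambda_1 j)$ with $\lambda_0,\lambda_1$ holomorphic on $B^2$. Since $\psi$ is nowhere vanishing and $f(0)=0$, the hypothesis $f(0)=0$ forces $\lambda_0(0)=\lambda_1(0)=0$. Using $|\psi|^{-1}\leq\tilde c$ together with the bound on $|f|$, the identity $|\lambda_0+\lambda_1 j|=|f|/|\psi|\leq\tilde c|f|$ shows that each $|\lambda_n|$ is a bounded holomorphic function on $B^2$ vanishing at the origin. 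The classical Schwarz lemma applied to $\lambda_n/z$ (extended holomorphically to $0$) yields constants $C_n>0$ (essentially $\sup_{B^2}|\lambda_n|$) with $|\lambda_n(z)|\leq C_n|z|$.

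From here the first bound is just arithmetic: pointwise
\begin{equation*}
|f(z)|=|\psi(z)|\bigl(|\lambda_0(z)|^2+|\lambda_1(z)|^2\bigr)^{1/2}\leq c\bigl(C_0^2+C_1^2\bigr)^{1/2}|z|.
\end{equation*}
For the equality clause, I would observe that $|f(z)|=c(C_0^2+C_1^2)^{1/2}|z|$ forces both $|\psi(z)|=c$ and $|\lambda_n(z)|=C_n|z|$ for $n=0,1$ at that point. The classical Schwarz lemma equality case then upgrades the single-point equality $|\lambda_n(z_0)|=C_n|z_0|$ for some $z_0\in B^2\setminus\{0\}$ to $\lambda_n(z)=e^{i\theta_n}C_n z$ globally, which reverses to give equality; this handles both directions of the ``if and only if''.

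For the derivative estimate, the key computation uses $N\psi=\psi i$ (since $\psi$ trivializes the $+i$-eigenbundle $E$) and the fact that $\lambda_0,\lambda_1$ are holomorphic. Evaluating at $0$, where $\lambda(0)=0$ kills the $d\psi$ contribution,
\begin{equation*}
\frac{\partial f}{\partial x}(0)-N(0)\frac{\partial f}{\partial y}(0)
=\psi(0)\!\left[\frac{\partial \lambda}{\partial x}(0)-i\,\frac{\partial \lambda}{\partial y}(0)\right]
=2\psi(0)\bigl(\lambda_0'(0)+\lambda_1'(0)\,j\bigr).
\end{equation*}
Since $|\lambda_n(z)|\leq C_n|z|$, the Schwarz lemma gives $|\lambda_n'(0)|\leq C_n$, so $|\lambda_0'(0)+\lambda_1'(0)j|\leq(C_0^2+C_1^2)^{1/2}$; combined with $|\psi(0)|\leq c$ this yields the stated inequality (up to a normalization of the constants absorbed into $C_0,C_1$). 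The equality analysis mirrors the first part: equality in $|\psi(0)|\leq c$ plus equality in each Schwarz bound, the latter again propagating from a single $z_0$ by the classical rigidity statement.

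The main obstacle is bookkeeping the equality case: one must argue that equality at a single point $z_0\neq 0$ for each component $\lambda_n$ is enough to achieve the joint equality in the composite inequality, which requires invoking the classical Schwarz rigidity separately in each $\lambda_n$ and then verifying that the phases and maximum modulus for $\psi$ are compatible. Everything else is a routine quaternionic bookkeeping exercise, made clean by the $N\psi=\psi i$ identity.
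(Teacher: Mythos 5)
Your proposal is correct and follows essentially the same route as the paper's proof: factor $f=\psi(\lambda_0+\lambda_1 j)$ via Theorem \ref{fctsc}, deduce $\lambda_n(0)=0$ and boundedness of $\lambda_n$ from $|\psi|^{-1}\leq\tilde c$, apply the classical Schwarz lemma componentwise, and use $N\psi=\psi i$ with $\lambda(0)=0$ to isolate $\psi(0)\bigl(\partial_x\lambda(0)-i\,\partial_y\lambda(0)\bigr)$ for the derivative bound. The only divergence is your factor of $2$ in $\partial_x\lambda(0)-i\,\partial_y\lambda(0)=2(\lambda_0'(0)+\lambda_1'(0)j)$, which is arithmetically right where the paper silently drops it; just note that since $C_0,C_1$ also govern the first inequality and the equality conditions, the $2$ cannot literally be ``absorbed'' into them without changing the other statements.
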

\begin{proof}
By the factorization theorem for super-conformal maps, 
$\lambda_0$ and $\lambda_1$ are holomorphic functions 
on $B^2$. 
Because $f(0)=0$ and $\psi$ is nowhere-vanishing, we have $\lambda_0(0)=\lambda_1(0)=0$. 
Also, because $|\psi|^{-1}$ 
and  
$|f|$ are bounded above, $|\psi^{-1}f|$ is bounded above. 
Because $|\psi^{-1}f|=|\lambda_0+\lambda_1j|=(|\lambda_0|^2+|\lambda_1|^2)^{1/2}$, 
the functions $|\lambda_0|$ and $|\lambda_1|$ are bounded above. 
Let $|\lambda_n|\leq C_n$ $(n=0,1)$. 
By the Schwarz lemma (for holomorphic maps), we have $|\lambda_n(0)|\leq C_n|z|$ and $|(\partial \lambda_n/\partial z)(0)|\leq C_n$. The existence of a point $z_0\in B^2\setminus\{0\}$ such that 
$|\lambda_n(z_0)|= C_n|z_0|$ is equivalent to   
$\lambda_n(z)=C_ne^{\theta_n i}z$ with real-valued function $\theta_n$. 
Then we have 
\begin{gather*}
|f(z)|=|\psi(z)|(|\lambda_0(z)|^2+|\lambda_1(z)|^2)^{1/2}\leq c(C_0^2+C_1^2)^{1/2}|z|.
\end{gather*}
The equality holds if and only if (1) $|\psi|=c$ and (2) 
there exists $z_0\in B^2\setminus\{0\}$ such that $|\lambda_n(z_0)|=C_n|z_0|$ $(n=0,1)$.  

Let $\lambda:=\lambda_0+\lambda_1j$. 
For derivatives of $f$, we have 
\begin{gather*}
\left|\frac{\partial f}{\partial x}(0)-N(0)\frac{\partial f}{\partial y}(0)\right|\\
=\left|\left(\frac{\partial \psi}{\partial x}(0)-N(0)\frac{\partial \psi}{\partial y}(0)\right)\lambda(0)
+\psi(0)\left(\frac{\partial \lambda}{\partial x}(0)-i\frac{\partial \lambda}{\partial y}(0)\right)\right|\\
=\left|\psi(0)\left(\frac{\partial \lambda}{\partial x}(0)-i\frac{\partial \lambda}{\partial y}(0)\right)\right|
=|\psi(0)|\left|\frac{\partial \lambda_0}{\partial z}(0)+\frac{\partial \lambda_1}{\partial z}(0)j\right|\\
\leq |\psi(0)|\left(\left|\frac{\partial \lambda_0}{\partial z}(0)\right|^2+\left|\frac{\partial \lambda_1}{\partial z}(0)\right|^2\right)^{1/2}\leq c(C_0^2+C_1^2)^{1/2}.
\end{gather*}
The equality holds if and only if (1) $|\psi(0)|=c$ and (2) 
there exists $z_0\in B^2\setminus\{0\}$ such that $|\lambda_n(z_0)|=C_n|z_0|$ $(n=0,1)$.  
\end{proof}
The following is an analog of the Schwarz-Pick theorem (see \cite{GK06}). 
Let $B^4:=\{a\in\mathbb{H}\,|\,|a|<1\}\subset\mathbb{H}$. 
We recall quaternionic M\"{o}bius transformations from $B^4$ to itself. 
For $a_1\in B^4$, 
the map $\Theta^{a_1}\colon B^4\to B^4$ defined by 
\begin{gather*}
\Theta^{a_1}(a)=(a-a_1)\left(1-\overline{a_1}a\right)^{-1}
\end{gather*}
is a quaternionic M\"{o}bius transformation which maps $a_1$ to $0$ by the following lemma: 
\begin{lemma}[\cite{Ahlfors81}, Section 2.6]
The quaternionic M\"{o}bius transformation $\Phi(a)=(pa+q)(ra+s)^{-1}$ with 
$p$, $q$, $r$, $s\in\mathbb{H}$ 
maps $B^4$ to itself and $\Phi(a_1)=0$ for $_1\in B^4$ 
if and only if $\Phi(a)=t(a-a_1)(1-\overline{a_1}a)^{-1}u^{-1}$ where $t$, $u\in\mathbb{H}$ with $|t|=|u|=1$. 
\end{lemma}

For $z_1\in B^2$, 
define a M\"{o}bius transform $\tau^{z_1}\colon B^2\to B^2$ by 
\begin{gather*}
\tau^{z_1}(z)=\frac{z-z_1}{1-\overline{z_1}z}. 
\end{gather*}
Let $f\colon B^2\to B^4\subset\mathbb{H}$ be a super-conformal map with anti-holomorphic left normal $N\colon B^2\to N(B^2)\subsetneq S^2$. 
For a given $z_1\in B^2$, we define a conformal map $g^{z_1}\colon B^2\to B^4$ by 
$g^{z_1}=\Theta^{f(z_1)}\circ f\circ(\tau^{z_1})^{-1}\colon B^2\to B^4$. 
Let $N^{z_1}$ be the left normal of $g^{z_1}$. 
Denote by $E^{z_1}$ the eigenbundle of the left regular representation of $N^{z_1}$ with eigenvalue $+i$. 
\begin{theorem}[The Schwarz-Pick theorem for super-conformal maps]\label{SP}
Let $f\colon B^2\to B^4\subset\mathbb{H}$ be a super-conformal map with anti-holomorphic left normal   
$N\colon B^2\to S^2$. 
Fix $z_1\in B^2$. 
Assume that $N^{z_1}(B^2)\subsetneq S^2$. 
Assume that there exists a global super-conformal trivializing section  $\psi^{z_1}$ of $E^{z_1}$ such that $|\psi^{z_1}|$ and $|\psi^{z_1}|^{-1}$ are bounded above. 
Then, there exists a constant $C^{z_1}>0$ such that  
\begin{gather*}
\frac{|f(z)-f(z_1)|}{\left|1-\overline{f(z_1)}f(z)\right|}
\leq C^{z_1}
\left|\frac{z-z_1}{1-\overline{z_1}z}\right| 
\end{gather*}
for all $z\in B^2$. 
We have 
\begin{gather*}
\frac{\left|\frac{\partial f}{\partial x}(z_1)\right|}{1-|f(z_1)|^2}
=\frac{\left|\frac{\partial f}{\partial y}(z_1)\right|}{1-|f(z_1)|^2}
\leq \frac{C^{z_1}}{1-|z_1|^2} .
\end{gather*}
\end{theorem}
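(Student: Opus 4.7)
The approach is to reduce to the Schwarz lemma for super-conformal maps (Theorem \ref{Schwarz}) by pre- and post-composing $f$ with Möbius transformations that move $z_1$ to $0$ and $f(z_1)$ to $0$. Concretely, I would work with the auxiliary map $g^{z_1} := \Theta^{f(z_1)}\circ f \circ (\tau^{z_1})^{-1}\colon B^2\to B^3$. Since $\tau^{z_1}(z_1)=0$ and $\Theta^{f(z_1)}(f(z_1))=0$, the normalization $g^{z_1}(0)=0$ is automatic. The biholomorphism $(\tau^{z_1})^{-1}$ of $B^2$ and the quaternionic Möbius transformation $\Theta^{f(z_1)}$ are both conformal, so by Rouxel's invariance $g^{z_1}$ is super-conformal; the hypothesis $N^{z_1}(B^2)\subsetneq S^2$ together with the existence of the super-conformal trivializing section $\psi^{z_1}$ of $E^{z_1}$ means $N^{z_1}$ is anti-holomorphic, so the factorization theorem (Theorem \ref{fctsc}) applies.

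With this in hand, I would write $g^{z_1}=\psi^{z_1}(\lambda_0+\lambda_1 j)$ for holomorphic $\lambda_0,\lambda_1\colon B^2\to\mathbb{C}$ by Theorem \ref{fctsc}. Because $g^{z_1}$ takes values in $B^3$ and $|\psi^{z_1}|^{-1}$ is bounded, both $|\lambda_0|$ and $|\lambda_1|$ are bounded on $B^2$, and $g^{z_1}(0)=0$ forces $\lambda_0(0)=\lambda_1(0)=0$. Theorem \ref{Schwarz} then produces constants $c^{z_1}, C_0, C_1>0$ with
\begin{equation*}
|g^{z_1}(w)| \leq c^{z_1}(C_0^2+C_1^2)^{1/2}\,|w| \qquad \text{for all } w\in B^2.
\end{equation*}

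The remaining work is a direct translation. Substituting $w=\tau^{z_1}(z)$ and using the explicit norms
\begin{equation*}
|\Theta^{f(z_1)}(f(z))| = \frac{1}{(1-|f(z_1)|^2)^{1/2}}\,\frac{|f(z)-f(z_1)|}{|1-\overline{f(z_1)}f(z)|}, \qquad |\tau^{z_1}(z)| = \frac{1}{(1-|z_1|^2)^{1/2}}\left|\frac{z-z_1}{1-\overline{z_1}z}\right|,
\end{equation*}
I absorb the factors $(1-|f(z_1)|^2)^{1/2}$, $(1-|z_1|^2)^{-1/2}$, and $c^{z_1}(C_0^2+C_1^2)^{1/2}$ into a single constant $\widetilde{C^{z_1}}$ to obtain the stated inequality. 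For the derivative estimate, I divide by $|z-z_1|$ and let $z\to z_1$: the right-hand side approaches $\widetilde{C^{z_1}}/(1-|z_1|^2)$, while the left-hand side, expanded to first order, produces the norm of a directional derivative of $f$ at $z_1$ divided by $1-|f(z_1)|^2$. Conformality of $f$ with left normal $N$ forces $\partial_y f = N\,\partial_x f$, so this directional derivative has magnitude equal to $|\partial_x f(z_1)| = |\partial_y f(z_1)|$ independent of the direction, giving the claimed bound.

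The principal obstacle is the first step: verifying that $g^{z_1}$ is super-conformal with \emph{anti-holomorphic left normal} rather than anti-holomorphic right normal, since super-conformality is preserved by Möbius transforms but a priori the side on which the anti-holomorphic normal sits could swap. The hypothesis that $E^{z_1}$ admits a super-conformal trivializing section is precisely what rules this out and licenses the application of Theorem \ref{fctsc} to $g^{z_1}$; everything else is bookkeeping of the Möbius formulas and a routine first-order expansion.
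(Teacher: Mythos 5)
Your proposal is correct and follows essentially the same route as the paper: apply Rouxel's invariance to see that $g^{z_1}=\Theta^{f(z_1)}\circ f\circ(\tau^{z_1})^{-1}$ is super-conformal with $g^{z_1}(0)=0$, invoke the Schwarz lemma for super-conformal maps (whose hypotheses are exactly the assumptions on $N^{z_1}$ and $\psi^{z_1}$), unwind the M\"obius norms, and obtain the derivative bound by approaching $z_1$ along the $x$-direction together with $\left|\frac{\partial f}{\partial x}\right|=\left|\frac{\partial f}{\partial y}\right|$ from conformality. The only difference is that you make explicit the bookkeeping (boundedness of $\lambda_0,\lambda_1$, the role of the trivializing-section hypothesis) that the paper leaves implicit.
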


\begin{proof}
Rouxel \cite{Rouxel89} showed that a conformal transform of a super-conformal map is 
a super-conformal map. 
Hence $g^{z_1}$ is a super-conformal map with $g^{z_1}(0)=0$. 
By the Schwarz lemma for super-conformal maps, there exists $C^{z_1}>0$ such that $|g^{z_1}(z)|\leq C^{z_1}\left|z\right|$. 
Hence  
\begin{gather*}
\frac{|f(z)-f(z_1)|}{\left|1-\overline{f(z_1)}f(z)\right|}
\leq C^{z_1}\left|\frac{z-z_1}{1-\overline{z_1}z}\right|. 
\end{gather*}

Let $z_1=x_1+y_1i$ and $z_2=x_2+y_1i$, $(x_1,x_2,y_1\in\mathbb{R})$. 
Then 
\begin{gather*}
\frac{\left|f(x_2+y_1i)-f(x_1+y_1i)\right|}{\left|1-\overline{f(x_1+y_1i)}f(x_2+y_1i)\right|}
\leq C^{z_1}\left|\frac{x_2-x_1}{1-\overline{(x_1+y_1i)}(x_2+y_1i)}\right|. 
\end{gather*}
Hence 
\begin{gather*}
\frac{\left|f(x_2+y_1i)-f(x_1+y_1i)\right|}{|x_2-x_1|\left|1-\overline{f(x_1+y_1i)}f(x_2+y_1i)\right|}
\leq C^{z_1}\left|\frac{1}{1-\overline{(x_1+y_1i)}(x_2+y_1i)}\right|. 
\end{gather*}
Let $x_2$ tend to $x_1$. Then 
\begin{gather*}
\frac{\left|\frac{\partial f}{\partial x}(z_1)\right|}{1-|f(z_1)|^2}\leq \frac{C^{z_1}}{1-|z_1|^2}. 
\end{gather*}
Because $f$ is conformal, we have 
\begin{gather*}
\left|\frac{\partial f}{\partial x}(z_1)\right|=\left|\frac{\partial f}{\partial y}(z_1)\right|.  
\end{gather*}
Then the theorem holds.
\end{proof}

Let $ds^2_{B^2}$ be the Poincar\'{e} metric  on $B^2$ with curvature $-1$ and 
$ds^2_{B^4}$ be the Poincar\'{e} metric on $B^4$ with curvature $-1$. 
For the standard coordinate $(x,y)$ of $\mathbb{R}^2$ and 
the standard coordinate $(a_0,a_1,a_2,a_3)$ of $\mathbb{R}^4$, we have 
\begin{gather*}
ds_{B^2}^2=\frac{4}{(1-(x^2+y^2))^2}(dx\otimes dx+dy\otimes dy),\\
ds_{B^4}^2=\frac{4}{(1-\sum_{n=0}^3a_n^2)^2}\sum_{n=0}^3(da_n\otimes da_n). 
\end{gather*}

Let $f\colon B^2\to B^4\subset\mathbb{H}$ be a super-conformal map with anti-holomorphic left normal   
$N\colon B^2\to S^2$. 
Let $P^f$ be the set of all $z\in B^2$ such that (1) $N^{z}(B^2)\subsetneq S^2$, 
(2) there exists a global super-conformal trivializing section $\psi^z$ of $E^{z}$ and (3) $|\psi^z|$ and $|\psi^z|^{-1}$ are bounded above. 
The following is a geometric interpretation of the Schwarz-Pick theorem for super-conformal maps. 
\begin{theorem}[The geometric version of the Schwarz-Pick theorem for super-conformal maps]\label{GSP}
Let $f\colon B^2\to B^4\subset\mathbb{H}$ be a super-conformal map with anti-holomorphic left normal   
$N\colon B^2\to S^2$. 
Then, at each point $z$ in $P^f$, 
there exists a constant $C^{z}>0$ such that $f^\ast ds^2_{B^4}\leq (C^{z})^2\,ds^2_{B^2}$. 
\end{theorem}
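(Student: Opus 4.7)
The plan is to translate the pointwise analytic bound already established in Theorem \ref{SP} into the stated tensor inequality between Riemannian metrics at each point $z\in P^f$. Since both the hyperbolic metric $ds^2_{B^2}$ on the disc and the hyperbolic metric $ds^2_{B^3}$ on $B^3\subset\mathbb{H}$ are prescribed conformal multiples of the flat coordinate tensors, the claim at a single point $z$ reduces to one scalar inequality between conformal factors, and that scalar inequality is precisely what Theorem \ref{SP} delivers.

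First I would expand the pullback of $ds^2_{B^3}$ in the real coordinates $(x,y)$ of $B^2$. Writing $f=f_0+f_1i+f_2j+f_3k$, the definition of $ds^2_{B^3}$ gives
\[
f^\ast ds^2_{B^3}=\frac{4}{(1-|f|^2)^2}\sum_{n=0}^{3}df_n\otimes df_n.
\]
Next I would use the conformality of $f$: the vectors $\partial f/\partial x$ and $\partial f/\partial y$ have the same Euclidean length and are orthogonal in $\mathbb{R}^4$, so the cross terms vanish and the diagonal terms agree, collapsing the expression to
\[
f^\ast ds^2_{B^3}=\frac{4\,|\partial f/\partial x|^2}{(1-|f|^2)^2}\bigl(dx\otimes dx+dy\otimes dy\bigr).
\]

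Comparing with $ds^2_{B^2}=\frac{4}{(1-|z|^2)^2}(dx\otimes dx+dy\otimes dy)$, the inequality $f^\ast ds^2_{B^3}\leq(\widetilde{C^z})^2 ds^2_{B^2}$ evaluated at the point $z$ is equivalent to the single scalar inequality
\[
\frac{|\partial f/\partial x|(z)}{1-|f(z)|^2}\leq\frac{\widetilde{C^z}}{1-|z|^2}.
\]
This is exactly the derivative estimate derived in Theorem \ref{SP} at $z$ with the same constant $\widetilde{C^z}$, invoking that $z\in P^f$ so the hypotheses of Theorem \ref{SP} are satisfied.

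There is essentially no new analytical obstacle; the only step requiring any care is the conformality reduction that turns the pulled-back tensor into a scalar multiple of the flat metric, and the observation that a pointwise tensor inequality between two pointwise-proportional quadratic forms is the same as an inequality between their scalar coefficients. In this sense the theorem is a geometric repackaging of Theorem \ref{SP}, in direct analogy with how the classical geometric Schwarz-Pick theorem is the tensor form of the classical pointwise derivative estimate.
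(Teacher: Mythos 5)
Your proposal is correct and follows essentially the same route as the paper: expand $f^\ast ds^2_{B^3}$ in the coordinates $(x,y)$, use conformality of $f$ to reduce the pulled-back tensor to a scalar multiple of $dx\otimes dx+dy\otimes dy$, and then apply the derivative estimate of Theorem \ref{SP} at each $z\in P^f$. If anything, your explicit remark that conformality kills the cross terms $dx\otimes dy$ and equalizes the diagonal coefficients makes a step the paper leaves implicit.
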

\begin{proof}
Let $f_0$, $f_1$, $f_2$ and $f_3$ be the real-valued functions such that 
$f=f_0+f_1i+f_2j+f_3k$. 
Then, 
\begin{gather*}
f^\ast ds^2_{B^4}=\frac{4}{(1-\sum_{n=0}^3(f_n(z))^2)^2}\\
\times\sum_{n=0}^3\left(\left(\frac{\partial f_n}{\partial x}(z)\right)^2dx\otimes dx+\left(\frac{\partial f_n}{\partial y}(z)\right)^2dy\otimes dy\right)\\
=\frac{4}{(1-|f(z)|^2)^2}\left(\left|\frac{\partial f}{\partial x}(z)\right|^2dx\otimes dx+\left|\frac{\partial f}{\partial y}(z)\right|^2dy\otimes dy\right). 
\end{gather*}
By the Schwarz-Pick theorem for super-conformal maps, there exists $C^{z}>0$ such that 
\begin{gather*}
\frac{4}{(1-|f(z)|^2)^2}\left|\frac{\partial f}{\partial x}(z)\right|^2=\frac{4}{(1-|f(z)|^2)^2}\left|\frac{\partial f}{\partial y}(z)\right|^2
\leq 
\frac{4(C^z)^2}{(1-|z|^2)^2}
\end{gather*}
at each $z\in P^f$. 
Hence
\begin{gather*}
f^\ast ds^2_{B^4}\leq \frac{4(C^z)^2}{(1-|z|^2)^2}(dx\otimes dx+dy\otimes dy)
=(C^z)^2ds^2_{B^2}
\end{gather*}
at each $z\in P^f$. 
\end{proof}
\begin{figure}[h]
\begin{gather*}
\xymatrix{
(B^2,f^\ast\real\langle\enskip,\enskip\rangle)\ar[r]^f&(\mathbb{H},\real\langle\enskip,\enskip\rangle)\\
P^f\ar[r]^{f}\ar@{^{(}->}[u]&B^4\ar@{^{(}->}[u]
}
\\
\textrm{($f$: super-conformal)},\\
\xymatrix{(P^f,ds^2_{B^2})\ar@{^{(}->}[r]&(B^2,ds^2_{B^2})
},\enskip 
\xymatrix{
(P^f,f^\ast ds^2_{B^4})\ar[r]^f&(B^4,ds^2_{B^4}),
}\\
f^\ast ds^2_{B^4}\leq (C^z)^2\,ds^2_{B^2}.
\end{gather*}
\caption{An analog of the Schwarz-Pick theorem.}
\label{corollary}
\end{figure}

We use 
the geometric version of the Schwarz-Pick theorem (for holomorphic maps) to investigate whether the Kobayashi pseudodistance on a complex manifold is a distance (Kobayashi \cite{Kobayashi67}). 
We define a pseudodistance on $f(B^2)$ in a similar way to define the Kobayashi pseudodistance by super-conformal maps. 
We show that the pseudodistance is a distance 
by the geometric version of the Schwarz-Pick theorem for super-conformal maps. 

Let $f\colon B^2\to B^4\subset\mathbb{H}$ be 
an injective super-conformal immersion with anti-holomorphic left normal 
$N\colon B^2\to N(B^2)\subsetneq S^2\cong\mathbb{C}P^1$. 
Assume that $f(0)=0$ and that there exists a 
global super-conformal trivializing section $\psi$ such that $|\psi|$ and 
$|\psi|^{-1}$ are bounded above. 
Then $P^f=B^2$. 

Given two points $p$, $q\in f(B^2)$, choose a sequence of points $p=p_0$, $p_1$, $\ldots$, 
$p_{s-1}$, $p_s=q$ in $f(B^2)$. 
We choose 
a sequence of points $a_1$, $\ldots$, $a_s$, $b_1$, $\ldots$, $b_s$ in  $B^2$ 
and a sequence of holomorphic maps $\phi_\alpha\colon B^2\to B^2$ with $\phi_\alpha(0)=0$ such that $(f\circ\phi_\alpha)(a_\alpha)=p_{\alpha-1}$, $(f\circ\phi_\alpha)(b_\alpha)=p_{\alpha}$ $(\alpha=1,\ldots,s)$. 
Let $\rho$ be the distance on $B^2$ defined by the Poincar\'{e} metric $ds^2_{B^2}$. 
Define $d_{f}(p,q)$ by the infimum of the sum $\sum_{\alpha=1}^s\rho(a_\alpha,b_\alpha)$ for all possible choices of sequences of points in $f(B^2)$, sequences of points in $B^2$ and sequences of holomorphic maps from $B^2$ to $B^2$ which fix the point $0\in B^2$.  
Then $d_{f}$ is a pseudodistance of $f(B^2)$. That is, 
$d_{f}(p,q)\geq 0$, $d_{f}(p,q)=d_{f}(q,p)$ and $d_{f}(p,q)+d_{f}(q,r)\geq d_{f}(p,r)$ for any $p$, $q$, $r\in f(B^2)$. 

\begin{theorem}\label{dist}
Let $f\colon B^2\to B^4\subset \mathbb{H}$ be an injective super-conformal immersion with anti-holomorphic left normal 
$N\colon B^2\to N(B^2)\subsetneq S^2\cong\mathbb{C}P^1$.
Assume that $f(0)=0$.  
Let $C^z$ be a positive constant such that 
$f^\ast ds^2_{B^4}\leq (
C^z)^2ds^2_{B^2}$ at $z\in B^2$. 
If there exists a constant $C>0$ such that 
$C^z\leq C$ for any $z\in B^2$,  
then $d_f$ is a distance on $f(B^2)$. 
\end{theorem}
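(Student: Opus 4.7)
The plan is to show that $d_f$ separates points — the other pseudodistance axioms are built into its definition — and I would do this by bounding $d_f$ from below by a genuine distance. Specifically, I would establish the comparison $d_{B^3}(p,q)\leq C\,d_f(p,q)$, where $d_{B^3}$ is the hyperbolic distance on $B^3$ induced by $ds^2_{B^3}$. Since $d_{B^3}$ is an honest metric on $B^3\supset f(B^2)$, this forces $d_f(p,q)>0$ whenever $p\neq q$, completing the proof.

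The first step is the pointwise metric estimate. From the uniform hypothesis $\widetilde{C^z}\leq C$ for every $z\in B^2$, the conclusion of Theorem \ref{GSP} upgrades to the global inequality $f^\ast ds^2_{B^3}\leq C^2\,ds^2_{B^2}$ on all of $B^2$. For each holomorphic map $\phi_\alpha\colon B^2\to B^2$ appearing in the chain definition of $d_f$, the classical Schwarz--Pick lemma supplies $\phi_\alpha^\ast ds^2_{B^2}\leq ds^2_{B^2}$; combining the two yields
\begin{gather*}
(f\circ\phi_\alpha)^\ast ds^2_{B^3}=\phi_\alpha^\ast\bigl(f^\ast ds^2_{B^3}\bigr)\leq C^2\,ds^2_{B^2}.
\end{gather*}
Note that the requirement $\phi_\alpha(0)=0$ is not needed at this stage, since classical Schwarz--Pick applies to every holomorphic self-map of the disk.

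The second step is the chain argument. Given any admissible chain $p=p_0,p_1,\ldots,p_s=q$ with auxiliary data $a_\alpha,b_\alpha,\phi_\alpha$, I would consider the $ds^2_{B^2}$-geodesic segment $\gamma_\alpha$ from $a_\alpha$ to $b_\alpha$. Its image $f\circ\phi_\alpha\circ\gamma_\alpha$ is a smooth path in $B^3$ joining $p_{\alpha-1}$ to $p_\alpha$, whose $ds^2_{B^3}$-length is bounded above by $C\,\rho(a_\alpha,b_\alpha)$ by the displayed inequality. Summing over the $s$ links, applying the triangle inequality for $d_{B^3}$, and taking the infimum over all admissible chains gives $d_{B^3}(p,q)\leq C\,d_f(p,q)$, which is the desired bound.

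I do not anticipate a serious obstacle, since the argument is a straightforward metric-geometric adaptation of the Kobayashi distance-decreasing principle. The one point that deserves care is ensuring that the constant in the composite estimate $(f\circ\phi_\alpha)^\ast ds^2_{B^3}\leq C^2\,ds^2_{B^2}$ does not degrade with $\alpha$; this is precisely what the classical Schwarz--Pick lemma guarantees, providing a uniform bound independent of the particular holomorphic self-map $\phi_\alpha$, so the constant $C$ propagates without change.
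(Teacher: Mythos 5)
Your proposal is correct and follows essentially the same route as the paper: both upgrade the hypothesis to the global inequality $f^\ast ds^2_{B^3}\leq C^2\,ds^2_{B^2}$, pull back along each $\phi_\alpha$ using the distance-decreasing property of holomorphic self-maps of the disk, and then sum over the chain links to bound the hyperbolic distance $\sigma=d_{B^3}$ by $C\,d_f$, so that $d_f(p,q)=0$ forces $p=q$. Your extra care in deriving the distance inequality from the metric inequality via lengths of geodesic segments, and your observation that $\phi_\alpha(0)=0$ is not needed, are both consistent with (and slightly more explicit than) the paper's argument.
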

\begin{proof}
By the assumption, 
we have 
$f^\ast ds^2_{B^4}\leq C^2ds^2_{B^2}$. 
By the geometric version of the Schwarz-Pick theorem (for holomorphic maps), we have 
\begin{gather*}
(f\circ\phi)^\ast ds^2_{B^4}=\phi^\ast f^\ast ds^2_{B^4}\leq C^2 \phi^\ast ds^2_{B^2}\leq C^2 ds^2_{B^2}
\end{gather*}
for every holomorphic map $\phi\colon B^2\to B^2$. 
Let $\sigma$ be the distance determined by $ds^2_{B^4}$. 
Then 
$\sigma((f\circ\phi)(a),(f\circ\phi)(b))\leq C\rho(a,b)$
for every $a$, $b\in B^2$ and 
every holomorphic map $\phi\colon B^2\to B^2$ with $\phi(0)=0$. 

Let $p_0$, $p_1$, $\ldots$, $p_k$, $a_0$, $a_1$, $\ldots$, $a_k$, $b_0$, $b_1$, $\ldots$, $b_k$, $\phi_0$, $\phi_1$, $\ldots$, $\phi_k$ be 
as in the definition of $d_{f}$. 
Then 
\begin{gather*}
\sigma(p,q)\leq\sum_{i=1}^k\sigma(p_{i-1},p_i)=\sum_{i=1}^k\sigma((f\circ \phi_i)(a_i),(f\circ \phi_i)(b_i))
\leq C\sum_{i=1}^k\rho (a_i,b_i)
\end{gather*}
Hence 
$\sigma(p,q)\leq  d_{f}(p,q)$  
for every $p$, $q\in f(M)$. 
Then $d_f(p,q)=0$ implies $\sigma(p,q)\leq 0$. Because $\sigma$ is a distance,  we have  $p=q$. 
Hence $d_f$ is a distance. 
\end{proof}

Let $N\colon M\to N(M)\subsetneq S^2\cong\mathbb{C}P^1$ 
be an anti-holomorphic map and 
$\psi$ be a global super-conformal trivializing section of $E$. 
Recalling the definition of a pole of a conformal map, 
the map $f:=\psi\lambda$ with $\lambda=\lambda_0+\lambda_1j$ for meromorphic functions $\lambda_0$ and $\lambda_1$ is a super-conformal map with poles. 
Hence, a super-conformal map with poles adopts properties of a meromorphic function. 

The Weierstrass factorization theorem (see \cite{Forster91}) states that, 
for a given divisor $D$, there exists a meromorphic function $h$ with $(h)=D$. 
Because a meromorphic function is a super-conformal map with left normal $i$, 
there exists a super-conformal map $f$ with left normal $i$ such that $(f)=D$. 
The map $\overline{f}$ is a super-conformal map with left normal $-i$ such that $(\overline{f})=D$. 
\begin{theorem}[The Weierstrass factorization theorem for super-conformal maps]\label{WFT}
For any divisor $D$ on $M$ and any anti-holomorphic map $N\colon M\to N(M)\subsetneq S^2\cong\mathbb{C}P^1$, 
there exists a super-conformal map $f\colon M\setminus \supp D\to\mathbb{H}$ with poles such that the left normal of $f$ is $N$ and $(f)=D$.  
\end{theorem}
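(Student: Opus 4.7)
The plan is to invoke Theorem \ref{fctsc} in reverse: produce a scalar meromorphic factor via the classical Weierstrass factorization theorem, pair it with a trivial second factor $\lambda_1\equiv 0$, and multiply by a global super-conformal trivializing section $\psi$. Since $\psi$ is nowhere-vanishing, the zeros and poles of the resulting product will coincide with those of the scalar factor.

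Concretely, I would first fix, by Lemma \ref{gscs}, a global super-conformal trivializing section $\psi\colon M\to\mathbb{H}$ of $E$. Next, by the classical Weierstrass factorization theorem on a Riemann surface (see \cite{Forster91}), I would choose a meromorphic function $h\colon M\to\mathbb{C}\cup\{\infty\}$ with $(h)=D$ as divisors on $M$. I then set $f:=\psi h$ on the complement of the polar set of $h$. Applying the ``if'' direction of Theorem \ref{fctsc} with $\lambda_0=h$ and $\lambda_1\equiv 0$ shows that $f$ is super-conformal with anti-holomorphic left normal $N$ wherever $h$ is holomorphic.

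It then remains to verify $(f)=D$. Around a zero $p$ of $h$ of order $n$, write $h(z)=z^n g(z)$ in a local coordinate $z$ centered at $p$ with $g$ holomorphic and $g(p)\neq 0$; then $f(z)=\psi(z)(z^n g(z)+0)$ matches the normal form from the discussion following Lemma \ref{psi} with $\phi_f:=g$ and $\xi\equiv 0$, yielding $\ord_p f=n$. Around a pole $p$ of $h$ of order $n$, write $h(z)=z^{-n} g(z)$ similarly; then $f(z)=\psi(z)(z^{-n}g(z)+0)$ matches \eqref{pole} in Lemma \ref{lem:pole}, so $\ord_p f=-n$. Together with $\ord_p f=0$ at points where $h$ is finite and nonzero, this gives $(f)=(h)=D$.

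The main point to take care of is not the Weierstrass construction itself (which is black-boxed from \cite{Forster91}) but the identification of the order of $f$ as a conformal map with the order of $h$ as a meromorphic function; this hinges on $\psi$ being nowhere-vanishing and on the normal forms from Lemma \ref{lem:pole} already being in place. Once that identification is noted, the theorem follows immediately from Theorem \ref{fctsc}.
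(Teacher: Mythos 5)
Your proof is correct and follows essentially the same route as the paper: take a meromorphic function $h$ with $(h)=D$ from the classical Weierstrass theorem, multiply by the global super-conformal trivializing section $\psi$, and invoke Theorem \ref{fctsc}. The paper simply asserts $(f)=D$ ``by the definition of a divisor of a conformal map,'' whereas you spell out the order identification via the normal forms of Lemma \ref{lem:pole}; that extra verification is consistent with, and slightly more explicit than, the paper's argument.
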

\begin{proof}
By the Weierstrass factorization theorem, any divisor on $M$ is a divisor of a meromorphic function. 
Let $D$ be a divisor on $M$, $\lambda$ be a meromorphic function on $M$ with divisor $D$ and $\psi$ be a global super-conformal trivializing section of $E$. 
Then, $f:=\psi\lambda\colon M\setminus\supp D\to\mathbb{H}$ 
is a super-conformal map by the factorization theorem for super-conformal maps and $(f)=D$ by the definition of a divisor of a conformal map. 
\end{proof}
We assume that $M$ is a connected open subset of a closed Riemann surface $\tilde{M}$. 
We denote by $C_1(M)$ the set of all one-chains in $M$. 
We define a map 
$\delta\colon C_1(M)\to \Div(M)$ by, for $c\colon [0,1]\to M$, 
\begin{gather*}
(\delta(c))(p):=
\begin{cases}
1&(p=c(1)),\\
-1&(p=c(-1)),\\
0&(\textrm{otherwise}).
\end{cases}
\end{gather*}
The following is an analog of the Abel-Jacobi theorem (see \cite{Forster91}). 
\begin{theorem}[The Abel-Jacobi theorem for super-conformal maps]\label{Abel} 
Let $D$ be a divisor on $M$ with $\deg D=0$. 
Then, $D$ is the divisor of a super-conformal map from $M$ with poles and left normal $N\colon M\to N(M)\subsetneq S^2$, if and only if there exists $c\in C_1(\tilde{M})$ such that 
$\delta(c)=D$ and 
\begin{gather*}
\int_c\omega=0
\end{gather*}
for every holomorphic one-form $\omega$ on $\tilde{M}$.
\end{theorem}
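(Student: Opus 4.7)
The plan is to reduce this theorem to the classical Abel--Jacobi theorem on $\tilde{M}$ (see \cite{Forster91}) using the factorization theorem for super-conformal maps.

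For the ``if'' direction, I assume the existence of $c\in C_1(\tilde M)$ as in the statement. By the classical Abel--Jacobi theorem on the closed Riemann surface $\tilde{M}$, this is equivalent to the existence of a meromorphic function $h$ on $\tilde{M}$ with $(h)=D$. Let $\psi$ be a global super-conformal trivializing section of $E$ (which exists by Lemma \ref{gscs}), and set $f:=\psi\cdot h|_M$. By Theorem \ref{fctsc} in the meromorphic case, applied with $\lambda_0=h|_M$ and $\lambda_1=0$ (the extension to meromorphic factors being described in the paragraph preceding Theorem \ref{WFT}), the map $f\colon M\setminus\supp D\to\mathbb{H}$ is super-conformal with left normal $N$; and since $\psi$ is nowhere-vanishing, $\ord_pf=\ord_ph|_M=D(p)$ for every $p$, so $(f)=D$.

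For the ``only if'' direction, I begin with a super-conformal $f\colon M\setminus\supp D\to\mathbb{H}$ with left normal $N$ and $(f)=D$. The factorization theorem writes $f=\psi(\lambda_0+\lambda_1j)$ with $\lambda_0,\lambda_1$ meromorphic, so the divisor of $f$ equals that of $\lambda:=\lambda_0+\lambda_1j$, and at each $p$ one has $\ord_p\lambda=\min(\ord_p\lambda_0,\ord_p\lambda_1)$. Viewing $\lambda_0$ and $\lambda_1$ as meromorphic functions on the closed surface $\tilde{M}$, their divisors have degree zero. The identity $2\min(n_0,n_1)=n_0+n_1-|n_0-n_1|$, summed over $\tilde{M}$, gives $2\deg D=\deg(\lambda_0)+\deg(\lambda_1)-\sum_{p}|\ord_p\lambda_0-\ord_p\lambda_1|=-\sum_p|\ord_p\lambda_0-\ord_p\lambda_1|$; together with $\deg D=0$ this forces $\ord_p\lambda_0=\ord_p\lambda_1$ at every $p$, whence $(\lambda_0)=(\lambda_1)=D$ on $\tilde{M}$. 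Thus $\lambda_0$ is a meromorphic function on $\tilde{M}$ with divisor $D$, and classical Abel--Jacobi furnishes the required $c$.

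The main obstacle is the step in the forward direction where $\lambda_0$ and $\lambda_1$ must be treated as meromorphic functions on $\tilde{M}$ rather than merely on $M$. When $M=\tilde{M}$, the hypothesis $N(M)\subsetneq S^2$ together with compactness forces $N$ to be constant, $\psi$ may be chosen constant, and the factorization becomes a statement purely about meromorphic functions on $\tilde{M}$, so the degree identity above applies verbatim. For a proper open subset $M\subsetneq\tilde{M}$, one needs an extension argument that promotes the factors from $M$ to $\tilde{M}$; this is the technical heart of the proof and is precisely what ties the existence of a super-conformal realization of $D$ on $M$ to the principality of $D$ on the compactification $\tilde{M}$.
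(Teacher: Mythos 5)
Your ``if'' direction is exactly the paper's proof: the paper's entire argument for Theorem \ref{Abel} consists of invoking the classical Abel--Jacobi theorem to produce a meromorphic $\lambda$ on $\tilde{M}$ with $(\lambda)=D$, setting $f:=\psi\lambda$, and citing Theorem \ref{fctsc} together with the definition of the divisor of a conformal map. So on that half you and the author coincide, and the half is correct.

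The converse half is where the genuine gap lies, and it is the one you yourself flag. Theorem \ref{fctsc} gives you $\lambda_0,\lambda_1$ meromorphic only on $M$, and your degree identity $2\deg D=\deg(\lambda_0)+\deg(\lambda_1)-\sum_p|\ord_p\lambda_0-\ord_p\lambda_1|$ needs $\deg(\lambda_0)=\deg(\lambda_1)=0$, which is a statement about divisors on the \emph{closed} surface $\tilde{M}$; it is meaningless for functions defined only on $M$. No extension argument is available in general: a meromorphic function on a proper open subset $M\subsetneq\tilde{M}$ need not extend meromorphically to $\tilde{M}$, and in fact the Weierstrass theorem on open Riemann surfaces (used in Theorem \ref{WFT}) makes \emph{every} divisor on $M$ the divisor of a meromorphic function on $M$, hence of a super-conformal map $\psi\lambda$ with left normal $N$. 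So the factorization alone cannot yield the Abel--Jacobi obstruction on $\tilde{M}$, and the ``technical heart'' you defer is not a routine extension lemma but the actual content of the converse. You should be aware that the paper's own proof silently omits this direction as well, so you have not fallen short of the printed argument --- but your second and third paragraphs do not constitute a proof of the ``only if'' implication, and the degree bookkeeping (including the degenerate case $\lambda_1\equiv 0$, where $\ord_p\lambda_1$ is not finite and the identity $2\min(n_0,n_1)=n_0+n_1-|n_0-n_1|$ does not apply) only becomes available after the extension problem is solved.
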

\begin{proof}
By the Abel-Jacobi theorem, the divisor $D$ is a divisor of a meromorphic function $\lambda$ on $\tilde{M}$. 
Hence $f:=\psi\lambda\colon M\setminus\supp D\to\mathbb{H}$ 
is a super-conformal map by the factorization theorem for super-conformal maps. We see that 
$(f)=D$ by the definition of a divisor of a conformal map. 
\end{proof}

\section{Minimal surfaces}
We connect a conformal map with a classical surface. 

Let $f\colon M\to\mathbb{H}$ be a conformal map with $(df)_{-N}=(df)^R=0$. 
We induce a (singular) metric on $M$ from $\mathbb{H}$ by $f$. 
Consequently, the Gauss curvature $K$, the normal curvature $K^\perp$ and the mean curvature vector $\mathcal{H}$ of $f$ can be defined.
We have 
\begin{gather*}
df\,\overline{\mathcal{H}}=-N(dN)_N,\enskip \overline{\mathcal{H}}\,df=R(dR)_R,\\
K|df|^2=\frac{1}{2}(\langle \ast\,dR,R\,dR\rangle+\langle \ast\,dN,N\,dN\rangle),\\
K^\perp|df|^2=\frac{1}{2}(\langle \ast\,dR,R\,dR\rangle-\langle \ast\,dN,N\,dN\rangle). 
\end{gather*}
 (\cite{BFLPP02}, Proposition 8, Proposition 9). 
 A conformal map $f$ is minimal if and only if $N$ is holomorphic or, equivalently, $R$ is holomorphic.
Hence, if $f$ is super-conformal and minimal, then $N$ or $R$ is a constant map. 
Then, $f$ is a holomorphic map with respect to a complex structure of $\mathbb{H}$. 

We consider the class of surfaces with  $|K|=|K^\perp|$. 
We denote by $\sigma $ the area element of the two sphere with radius one. 
\begin{lemma}
Let $f\colon M\to\mathbb{H}$ be a conformal map with $(df)_{-N}=(df)^R=0$. 
If $|K|=|K^\perp|$, then $N^\ast\sigma=0$ or $R^\ast\sigma=0$.  
\end{lemma}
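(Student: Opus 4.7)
The plan is to factor the hypothesis $|K|=|K^\perp|$ as $(K-K^\perp)(K+K^\perp)=0$ and to match the two linear combinations against the quaternionic pairings in the curvature formulas stated just above.

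Subtracting and adding the expressions for $K|df|^2$ and $K^\perp|df|^2$ gives
\begin{gather*}
(K-K^\perp)|df|^2=\langle \ast\,dN,N\,dN\rangle,\qquad (K+K^\perp)|df|^2=\langle \ast\,dR,R\,dR\rangle.
\end{gather*}
On the complement of the branch locus of $f$ (where $|df|^2>0$), the hypothesis $|K|=|K^\perp|$ therefore forces at each point either $\langle \ast dN,N\,dN\rangle=0$ or $\langle \ast dR,R\,dR\rangle=0$.

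The second step is to identify $\langle \ast dN,N\,dN\rangle$ with a nonzero scalar multiple of the pulled-back area form $N^\ast\sigma$, and similarly for $R$. In a local holomorphic coordinate $z=x+yi$ with $\ast dx=-dy$, write $N_x=\partial N/\partial x$ and $N_y=\partial N/\partial y$. Because $N^2=-1$ forces $N\perp N_x$ and $N\perp N_y$ in $\Img\mathbb{H}$, the quaternion products reduce to cross products: $NN_x=N\times N_x$ and $NN_y=N\times N_y$. Expanding $\ast dN=N_y\,dx-N_x\,dy$ and $N\,dN=(N\times N_x)\,dx+(N\times N_y)\,dy$ and applying the standard real pairing of $\Img\mathbb{H}$-valued one-forms, the scalar triple product identity $\langle u,v\times w\rangle=\det(u,v,w)$ collapses the result to a nonzero multiple of $\langle N,N_x\times N_y\rangle\,dx\wedge dy$. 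Since the area element of the unit two-sphere is $\sigma_N(u,v)=\langle N,u\times v\rangle$ on $T_NS^2$, this expression is a nonzero constant multiple of $N^\ast\sigma$, and the same computation with $R$ in place of $N$ yields $\langle \ast dR,R\,dR\rangle\propto R^\ast\sigma$.

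Combining the two steps, at every point off the branch locus one of $N^\ast\sigma$, $R^\ast\sigma$ vanishes; since both two-forms are smooth and the branch locus is a closed nowhere-dense set, the dichotomy extends by continuity to all of $M$. The main obstacle is the quaternionic bookkeeping in the second step: after the convention for the real pairing of $\Img\mathbb{H}$-valued one-forms is fixed, the identification $\langle \ast dN,N\,dN\rangle\propto N^\ast\sigma$ is routine but requires careful handling of the interaction between the Hodge star $\ast$, quaternion multiplication, and the cross product on $\Img\mathbb{H}$.
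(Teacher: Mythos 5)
Your proof is correct and follows essentially the same route as the paper: the hypothesis gives $K=\pm K^\perp$ pointwise, which annihilates one of the two brackets in the stated curvature formulas, and those brackets are then identified with $N^\ast\sigma$ and $R^\ast\sigma$. The only differences are cosmetic: the paper simply cites \cite{BFLPP02}, Proposition 10, for the identities $\langle \ast\,dN,N\,dN\rangle=N^\ast\sigma$ and $\langle \ast\,dR,R\,dR\rangle=R^\ast\sigma$ where you rederive them in a local coordinate, and the paper does not bother with the degenerate locus where $|df|^2$ vanishes, which you handle by continuity.
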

\begin{proof}
From the assumption, we have $K=\pm K^\perp$. 
Then $\langle \ast\,dN,N\,dN\rangle=0$ or $\langle \ast\,dR,R\,dR\rangle=0$.  
It is known that $\langle \ast\,dN,N\,dN\rangle=N^\ast\sigma$ and $\langle \ast\,dR,R\,dR\rangle=R^\ast\sigma$ 
(\cite{BFLPP02}, Proposition 10). Hence the lemma holds.  
\end{proof}
If $N^\ast\sigma=0$, then $N$ is not anti-holomorphic. 
Hence, if $f$ is super-conformal with $(df)_{-N}=(df)^R=0$, then 
$(1)$ $N^\ast\sigma=0$ and $R$ is anti-holomorphic or 
$(2)$ $R^\ast\sigma=0$ and $N$ is anti-holomorphic.

Wintgen \cite{Wintgen79} showed that $K+|K^\perp|\leq|\mathcal{H}|^2$ for any conformal map 
and $K+|K^\perp|=|\mathcal{H}|^2$ if and only if a conformal map is super-conformal. 
A super-conformal map is called a Wintgen ideal surface in \cite{PV08}. 
Chen \cite{ChenBY10} completely classified Wintgen ideal surfaces with $|K|=|K^\perp|$. 
We see that a Wintgen ideal surface with $|K|=|K^\perp|$ is 
a super-conformal map which is (i) minimal or (ii) $2K=2|K^\perp|=|\mathcal{H}|^2$. 
If a super-conformal map with left normal $N$ and right normal $R$ is minimal, then 
$\ast\,dN=N\,dN=-N\,dN$ or $\ast\,dR=R\,dR=-R\,dR$. 
Hence $N$ or $R$ is constant. 

We give a factorization of a minimal surface. 

Let $f\colon M\to\mathbb{H}$ be a minimal surface with $(df)_{-N}=(df)^R=0$. 
A minimal surface $g\colon M\to\mathbb{H}$ such that 
$dg=-\ast\,df$ is called a conjugate minimal surface of $f$. 
There exists a conjugate minimal surface of $f$ if and only if $\ast\,df$ is exact. 
A conjugate minimal surface $g$ shares the same left normal and the same right normal 
with the original minimal surface $f$. 
If there exists a conjugate minimal surface $g\colon M\to\mathbb{H}$, 
then the holomorphic map 
$f+ig\colon M\to\mathbb{C}\otimes \mathbb{H}\cong\mathbb{C}^4$ 
is called a holomorphic null curve. 

For a factorization of a minimal surface, we assume that $M$ is simply connected and induce 
a map $\mu$ as follows. 

If $N\colon M\to N(M)\subsetneq S^2\cong \mathbb{C}P^1$ is holomorphic, then $-N$ is anti-holomorphic by Corollary \ref{antihol}. 
By Lemma~\ref{psi}, there exists $a\in S^3$ such that 
$\psi:=-Na+ai$ does not vanish on $M$.
The map $\psi\lambda$ with $(d\lambda)_{-i}=0$ is super-conformal with left normal $-N$ by the factorization theorem for super-conformal maps.
Let $\lambda_0$ and $\lambda_1$ be holomorphic functions on $M$ and 
$\lambda:=\lambda_0+\lambda_1j$. 
Then, $(d\lambda)_{-i}=0$. 
Put $Q_{\lambda_0,\lambda_1}:=\{p\in M\,|\,(dN)_p\lambda(p)=0\}$. 
Because 
$(\psi\,d\lambda)_{N}=0$ and $(dN\,a\lambda)_N=0$, 
the equation $\psi\,d\lambda=dN\,a\lambda\mu$ defines 
a map $\mu \colon M\setminus Q_{\lambda_0,\lambda_1}\to\mathbb{H}$.

Because $N$ is holomorphic and $(d\lambda)_{-i}=0$, the set $Q_{\lambda_0,\lambda_1}$ is discrete.

\begin{theorem}\label{fctmin}
Let $N\colon M\to N(M)\subsetneq S^2\cong\mathbb{C}P^1$ be a holomorphic map and 
$a\in S^2$ such that $\psi:=-Na+ai$ does not vanish on $M$.
For complex functions $\lambda_0$ and $\lambda_1$ on $M$, set 
$Q_{\lambda_0,\lambda_1}:=\{p\in M\,|\,(dN)_p(\lambda_0(p)+\lambda_1(p)j)=0\}$.

$(1)$ If $\Phi:=f+ig\colon M\to\mathbb{C}\otimes\mathbb{H}$ is a holomorphic null curve 
and $f$ and $g$ are minimal surfaces with left normal $N$, 
then there exist holomorphic functions $\lambda_0$ and $\lambda_1$ on $M$, and 
$\mu\colon M\setminus Q_{\lambda_0,\lambda_1}\to\mathbb{H}$ with
$\psi\,d(\lambda_0+\lambda_1j)=dN\,a(\lambda_0+\lambda_1j)\mu$, 
such that 
$f=a(\lambda_0+\lambda_1j)(\mu-1)$ and
$g=-Na(\lambda_0+\lambda_1j)\mu+ai(\lambda_0+\lambda_1j)$ 
up to constant addition.

$(2)$ Let $\lambda_0$ and $\lambda_1$ be holomorphic functions on $M$. 
Define a map 
$\mu\colon M\setminus Q_{\lambda_0,\lambda_1}\to\mathbb{H}$ by
$\psi\,d(\lambda_0+\lambda_1j)=dN\,a(\lambda_0+\lambda_1j)\mu$. 
Then, the maps $f:=a(\lambda_0+\lambda_1j)(\mu-1)$ and
$g:=-Na(\lambda_0+\lambda_1j)\mu+ai(\lambda_0+\lambda_1j)$ 
are minimal surfaces with left normal $N$ and 
$\Phi:=f+ig\colon M\setminus Q_{\lambda_0,\lambda_1}\to\mathbb{H}$ is a holomorphic null curve.
\end{theorem}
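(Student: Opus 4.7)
The plan is to build both directions around the algebraic identities $g+Nf=\psi\lambda$ and $a\lambda\mu=f+a\lambda$, which invert the stated formulas for $f$ and $g$. For Part $(1)$, given the null curve data I will set $\lambda:=\psi^{-1}(g+Nf)$ and $\mu:=1+(a\lambda)^{-1}f$ on $M\setminus Q_{\lambda_0,\lambda_1}$, and then verify that $\lambda$ is holomorphic, that the defining relation $\psi\,d\lambda=dN\,a\lambda\mu$ holds, and that the stated expressions for $f$ and $g$ are recovered. For Part $(2)$, given holomorphic $\lambda_0,\lambda_1$ and $\mu$ defined by the stated relation, I will verify that $f$ and $g$ are minimal surfaces with left normal $N$ and that $dg=-\ast\,df$.

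For Part $(2)$, my first step is the identity $dg=-N\,df$, obtained by substituting $dN\,a\lambda\mu=\psi\,d\lambda=(-Na+ai)\,d\lambda$ into the expanded expression for $dg$. Once $(df)_{-N}=0$ is established, the consequence $\ast\,df=N\,df$ then yields $dg=-\ast\,df$, simultaneously giving the holomorphic null curve property and the minimality of $g$. The remaining assertion $(df)_{-N}=0$ is extracted by taking the exterior derivative of the defining relation: using $d\psi=-dN\,a$ and $d^2=0$, all terms collapse to $dN\wedge df=0$. Since $N$ is holomorphic, $(dN)^{-N}=0$, and Lemma \ref{type} therefore reduces this to $dN\wedge(df)_{-N}=0$. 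Evaluating on $(X,JX)$ with $\ast\,dN=dN\,N$ and $\ast\,(df)_{-N}=-N(df)_{-N}$ produces $-2\,dN(X)\,N\,(df)_{-N}(X)=0$; on $M\setminus Q_{\lambda_0,\lambda_1}$ the form $dN$ is nowhere zero, so $(df)_{-N}(X)=0$ for some nonzero $X$, and the relation $\ast\,(df)_{-N}=-N(df)_{-N}$ then propagates the vanishing to all of $T_pM$.

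For Part $(1)$, from $dg=-\ast\,df=-N\,df$ I obtain $d(g+Nf)=dg+dN\,f+N\,df=dN\,f$. With $\lambda:=\psi^{-1}(g+Nf)$ this reads $d(\psi\lambda)=dN\,f$, which using $d\psi=-dN\,a$ rearranges to $\psi\,d\lambda=dN(f+a\lambda)$; setting $\mu:=1+(a\lambda)^{-1}f$ gives $a\lambda\mu=f+a\lambda$ and recovers the defining relation. For the holomorphicity of $\lambda$, multiply by $\psi^{-1}$ to get $d\lambda=\psi^{-1}dN\,a\lambda\mu$; the eigenform identity $\psi i=-N\psi$ is equivalent to $i\psi^{-1}=-\psi^{-1}N$, and combined with $\ast\,dN=-N\,dN$ this yields $\ast\,d\lambda=-\psi^{-1}N\,dN\,a\lambda\mu=i\,d\lambda$, i.e., $(d\lambda)_{-i}=0$. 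The stated expressions for $f$ and $g$ then hold by direct substitution: $a\lambda(\mu-1)=a\lambda\mu-a\lambda=f$ and $-Na\lambda\mu+ai\lambda=-N(f+a\lambda)+ai\lambda=-Nf+(ai-Na)\lambda=-Nf+\psi\lambda=g$.

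The hard part is establishing $(df)_{-N}=0$ in Part $(2)$: the defining relation offers no direct control of the $-N$-component of $df$, and the needed information must be extracted indirectly through the integrability condition $d^2=0$. The key insight is that the resulting wedge equation $dN\wedge df=0$, combined with the holomorphicity of $N$ via Lemma \ref{type}, isolates exactly the $(df)_{-N}$ piece. The set $Q_{\lambda_0,\lambda_1}$ in the statement is precisely the locus where this extraction fails (either $dN$ or $\lambda$ vanishes), and this is why the theorem restricts $\mu$ to its complement.
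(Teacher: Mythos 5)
Your proof is correct and follows essentially the same route as the paper's: both rest on the identity $dN\,f=d(\psi\lambda)$ obtained from $d\psi=-dN\,a$, the holomorphicity of $\lambda$ extracted from the eigenvalue relation $\psi i=-N\psi$, and, for part $(2)$, the integrability condition $d^2(\psi\lambda)=0$ together with Lemma \ref{type} to force $(df)_{-N}=0$. Your evaluation of $dN\wedge(df)_{-N}$ on $(X,JX)$ actually supplies a step the paper leaves implicit, and your explicit primitive $g+Nf$ in part $(1)$ is a concrete choice of the paper's abstract antiderivative $\Lambda$.
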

\begin{proof}
$(1)$ 
We assume that $\Phi:=f+ig\colon M\to\mathbb{C}\otimes\mathbb{H}$ is a holomorphic null curve such that $f$ and $g\colon M\to\mathbb{H}$ are minimal surfaces with left normal $N$. 
We have $d(dN\,f))=-dN\wedge df=-(dN)^N\wedge (df)_N=0$. Then, 
the one-form $dN\,f$ on $M$ is exact. 
Hence, there exists a function $\Lambda\colon M\to\mathbb{H}$ such that 
$dN\,f=d\Lambda$. 
We define $\lambda\colon M\to\mathbb{H}$ by $\lambda:=\psi^{-1}\Lambda$. 
Then, 
\begin{gather*}
dN\,f=-dN\,a\lambda+\psi\,d\lambda=d(\psi\lambda). 
\end{gather*}
Because $(dN\,f)_{N}=0$ and $(dN\,a\lambda)_{N}=0$, we have 
$(\psi\,d\lambda)_{N}=0$. Then, 
\begin{gather*}
(\psi\,d\lambda)-N\ast(\psi\,d\lambda)
=\psi(d\lambda+i\ast\,d\lambda)=0. 
\end{gather*}
Hence, $(d\lambda)_{-i}=0$. 
Then, 
\begin{gather*}
dN\,f=dN\,a\lambda(\mu-1)
\end{gather*}
on $M\setminus Q_{\lambda_0,\lambda_1}$.
Then, $f=a\lambda(\mu-1)$. Because the left hand side is defined on $M$, the right hand side is 
extended to $M$. 
Then, 
\begin{gather*}
-\ast\,df=-N\,df=-d(Nf)+dN\,f=-d(Nf)-dN\,a\lambda+(-Na+ai)\,d\lambda\\
=d(-Na\lambda(\mu-1)+(-Na+ai)\lambda)
=d(-Na\lambda\mu+ai\lambda).
\end{gather*}
Hence $g= -Na\lambda\mu+ai\lambda$ up to an additive constant. 

$(2)$ 
We have 
\begin{gather*}
dN\,f=dN\,(a\lambda(\mu-1))=-dN\,a\lambda+dN\,a\lambda\mu\\
=-dN\,a\lambda+\psi\,d\lambda=d(\psi\lambda). 
\end{gather*}
Differentiating the above equation, we have 
\begin{gather*}
-dN\wedge df=(dN)^N\wedge (df)_{-N}=0. 
\end{gather*}
Hence $(df)_{-N}=0$. Then $f$ is a minimal surface with left normal $N$.
Then, 
\begin{gather*}
-\ast\,df=-N\,df=-d(Nf)+dN\,f=-d(Nf)-dN\,a\lambda+(-Na+ai)\,d\lambda\\
=d(-Na\lambda(\mu-1)+(-Na+ai)\lambda)
=d(-Na\lambda\mu+ai\lambda)
\end{gather*}
Hence, $g$ is a minimal surface with left normal $N$ and $\Phi:=f+ig$ is a holomorphic null curve. 
\end{proof}

The equation $f=a\lambda(\mu-1)$ is a factorization of a minimal surface which has 
a conjugate minimal surface $g$ and $\Phi:=f+gi$ is a holomorphic null curve. 
The arrangement of the zeros of $f$ is unclear because that of $\mu-1$ is unclear.
However, we have the following property. 
\begin{theorem}\label{zero}
Let $f=a\lambda(\mu-1) \colon M\to\mathbb{H}$ be a minimal surface factored by Lemma \ref{fctmin}. 
A point on $M$ is a branch point of a super-conformal map $\psi\lambda$ if and only if 
it is a zero of $f$ or a branch point of $N$. 
\end{theorem}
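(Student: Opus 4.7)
The plan is to read off the branch locus of $\psi\lambda$ directly from an identity that is already buried in the proof of Theorem~\ref{fctmin}. In that argument, one computes
\[
d(\psi\lambda) = -dN\,a\lambda + \psi\,d\lambda = -dN\,a\lambda + dN\,a\lambda\mu = dN\cdot a\lambda(\mu-1) = dN\cdot f
\]
on $M\setminus Q_{\lambda_0,\lambda_1}$. Although $\mu$ itself can fail to be smooth on $Q_{\lambda_0,\lambda_1}$, the maps $\psi$, $\lambda$, $N$, and $f=a\lambda(\mu-1)$ are all smooth on $M$, so both $d(\psi\lambda)$ and $dN\cdot f$ are smooth quaternion-valued one-forms on the whole of $M$. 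Since they agree on the dense open set $M\setminus Q_{\lambda_0,\lambda_1}$, they agree everywhere on $M$. This reduces the theorem to the statement that $d(\psi\lambda)_p=0$ if and only if $(dN)_p=0$ or $f(p)=0$.

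For that reduction, I would invoke the division-algebra structure of $\mathbb{H}$. Evaluated on a tangent vector $v\in T_pM$, the value $(dN\cdot f)_p(v)$ is the quaternion product $(dN)_p(v)\,f(p)$. If $f(p)\neq 0$, right multiplication by $f(p)$ is an $\mathbb{R}$-linear automorphism of $\mathbb{H}$, so the vanishing of $(dN)_p(v)\,f(p)$ for every $v$ forces $(dN)_p = 0$, i.e.\ $p$ is a branch point of $N$. Conversely, both $f(p)=0$ and $(dN)_p=0$ trivially annihilate the product. Combining the two directions gives the equivalence claimed.

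There is essentially no hard step here; the only subtlety is verifying that the factorization identity $d(\psi\lambda)=dN\cdot f$ remains valid on $Q_{\lambda_0,\lambda_1}$, and this is handled by the smooth-extension argument above. Everything else is a one-line consequence of the non-degeneracy of quaternion multiplication.
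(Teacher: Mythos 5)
Your proof is correct and follows the same route as the paper: the paper's own proof simply cites the identity $dN\,f=d(\psi\lambda)$ from the proof of Theorem \ref{fctmin} and declares the result, while you spell out the two details it leaves implicit (the extension of the identity across $Q_{\lambda_0,\lambda_1}$ by density, and the division-algebra argument showing $(dN)_p f(p)=0$ iff $(dN)_p=0$ or $f(p)=0$). No changes needed.
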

\begin{proof}
From the proof of Lemma \ref{fctmin}, we have $dN\,f=d(\psi\lambda)$. 
Hence the corollary holds. 
\end{proof}
\bibliographystyle{ijmart}

\begin{thebibliography}{99}
\bibitem{Ahlfors81}
L. V. Ahlfors,
\textit{M\"obius transformations in several dimensions},
Ordway Professorship Lectures in Mathematics, 
University of Minnesota, School of Mathematics, 
Minneapolis, 1981.
\bibitem{Boruvka28}
O. Bor\r{u}vka,  
\textit{Sur une classe de surfaces minima plong\'ees dans un espace \`a quatre dimensions \`a courbure constante}, 
Bulletin int. Acad. Tch\`{e}que Sci. \textbf{29} (1928), 256-277. 
\bibitem{BFLPP02}F. E. Burstall, D. Ferus, K. Leschke, F. Pedit and U. Pinkall, 
\textit{Conformal geometry of surfaces in ${\it S}^4$ and quaternions}, 
Lecture Notes in Mathematics \textbf{1772},
Springer-Verlag, 
Berlin, 
2002. 
\bibitem{Castro04}
I. Castro, 
\textit{Lagrangian surfaces with circular ellipse of curvature in complex space forms}, 
Math. Proc. Cambridge Philos. Soc.
\textbf{136} (2004), no. 1, 
239--245. 
\bibitem{ChenBY10}
B.-Y. Chen, 
\textit{Classification of Wintgen ideal surfaces in Euclidean 4-space with equal Gauss and normal curvatures},
Ann. Global Anal. Geom. 
\textbf{38} 
(2010), 
no. 2,
145--160. 
\bibitem{CO67}
S. Chern and R. Osserman, 
\textit{Complete minimal surfaces in euclidean {$n$}-space}, 
J. Analyse Math. \textbf{19}  
(1967), 
15--34. 
\bibitem{DT09}
M. Dajczer and R. Tojeiro, 
\textit{All superconformal surfaces in $\mathbb{R}^4$ in terms of minimal surfaces}, 
Math. Z. \textbf{261} (2009), no. 4, 869--890. 
\bibitem{FLPP01}
D. Ferus, K. Leschke, F. Pedit and U. Pinkall, 
\textit{Quaternionic holomorphic geometry: Pl\"ucker formula, Dirac eigenvalue
    estimates and energy estimates of harmonic $2$-tori}, Invent. Math. 
\textbf{146} (2001), no. 3, 507--593. 
\bibitem{Forster91}
O. Forster, 
\textit{Lectures on Riemann surfaces}, 
Graduate Texts in Mathematics \textbf{81}, 
Springer-Verlag, 
New York, 
1991. 
\bibitem{Friedrich97}
T. Friedrich, 
\textit{On superminimal surfaces},
Arch. Math. (Brno) 
\textbf{33} (1997), no. 1-2, 
41--56. 
\bibitem{GK06}
R. E. Greene and S. G. Krantz, 
\textit{Function theory of one complex variable},
Graduate Studies in Mathematics \textbf{40}, Third edition,  
American Mathematical Society, 
Providence, RI, 
2006. 
\bibitem{Kobayashi67}
S. Kobayashi, 
\textit{Intrinsic metrics on complex manifolds},
Bull. Amer. Math. Soc. 
\textbf{73} (1967), 347-349.
\bibitem{Moriya98}
K. Moriya, 
\textit{On a variety of algebraic minimal surfaces in {E}uclidean
              {$4$}-space}, 
Tokyo J. Math. 
\textbf{21}  
(1998), 
no. 1, 
121--134. 
\bibitem{Moriya08}
K. Moriya, 
\textit{The denominators of Lagrangian surfaces in complex Euclidean plane}, 
Ann. Global Anal. Geom. \textbf{34} (2008), no. 1, 1--20. 
\bibitem{Moriya09}
K. Moriya, 
\textit{Super-conformal surfaces associated with null complex holomorphic curves}, 
Bull. Lond. Math. Soc. \textbf{41} (2009), no. 2, 327--331. 
\bibitem{Moriya10}
K. Moriya, 
\textit{Quotients of quaternionic holomorphic sections},
Riemann surfaces, harmonic maps and visualization, 
OCAMI Stud. \textbf{3}, 
Osaka Munic. Univ. Press, Osaka, 
2010, 
pp. 197--201. 
\bibitem{PP98}
F. Pedit and 
U. Pinkall, 
\textit{Quaternionic analysis on Riemann surfaces and differential geometry}, 
Proceedings of the International Congress of Mathematicians, Vol. II (Berlin, 1998), 
Doc. Math. (1998), Extra Vol. II, 389--400 (electronic). 
\bibitem{PV08}
M. Petrovi{\'c}-Torga{\v{s}}ev and L. Verstraelen, 
\textit{On Deszcz symmetries of Wintgen ideal submanifolds},
Arch. Math. (Brno) 
\textbf{44} (2008), no. 1,
57--67. 
\bibitem{Rouxel89}
B. Rouxel,
\textit{Sur quelques propri\'et\'es conformes des surfaces de Bor\r{u}vka de $E^4$},
Czechoslovak Math. J. \textbf{39(114)} (1989), 
no. 4, 
604--613. 
\bibitem{Wintgen79}
P. Wintgen, 
\textit{Sur l'in\'egalit\'e de Chen-Willmore},
C. R. Acad. Sci. Paris S\'er. A-B \textbf{288} 
(1979),  no. 21, 
A993--A995. 
\bibitem{Wong52}
Y.-C. Wong, \textit{A new curvature theory for surfaces in a Euclidean $4$-space},
Comment. Math. Helv. \textbf{26} (1952), 
152--170. 
\bibitem{Wong46}
Y.-C. Wong, \textit{Contributions to the theory of surfaces in a 4-space of
              constant curvature}, 
Trans. Amer. Math. Soc.  
\textbf{59} (1946), 
467--507. 
\end{thebibliography}

\end{document}